
\documentclass[twoside]{article}
\usepackage{amsmath,amsthm,amssymb}
\usepackage{newtxtext,newtxmath}
\usepackage[text={12.5cm,18.7cm},centering,paperwidth=16.96cm,paperheight=23.68cm]{geometry}
\usepackage[fontsize=10.4pt]{scrextend}
\usepackage[hyperfootnotes=false,colorlinks=true,allcolors=blue]{hyperref}
\usepackage{algorithm}
\usepackage{array}
\usepackage{listings}
\usepackage{multirow}
\usepackage{tabularx}
\usepackage{xcolor}
\usepackage[noend]{algpseudocode}

\definecolor{codegreen}{rgb}{0,0.6,0}
\definecolor{codegray}{rgb}{0.5,0.5,0.5}
\definecolor{codepurple}{rgb}{0.58,0,0.82}
\definecolor{backcolour}{rgb}{0.95,0.95,0.92}

\lstdefinestyle{mystyle}
{
    backgroundcolor=\color{backcolour},
    commentstyle=\color{codegreen},
    keywordstyle=\color{blue},
    numberstyle=\tiny\color{codegray},
    stringstyle=\color{codepurple},
    basicstyle=\ttfamily\footnotesize,
    breakatwhitespace=false,
    breaklines=true,
    captionpos=b,
    keepspaces=true,
    numbers=left,
    numbersep=5pt,
    showspaces=false,
    showstringspaces=false,
    showtabs=false,
    tabsize=2
}
\lstset{style=mystyle}

\pagestyle{myheadings}

\def\titlerunning#1{\gdef\titrun{#1}}

\makeatletter
\def\author#1{\gdef\autrun{\def\and{\unskip,}#1}\gdef\@author{#1}}

\makeatother

\def\email#1{E-mail: \href{mailto:#1}{#1}}
\def\subjclass#1{\par\bigskip\noindent\textsl{Mathematics Subject Classification 2020. }#1}
\def\keywords#1{\par\smallskip\noindent\textsl{Keywords. }#1}

\newenvironment{dedication}{\itshape\center}{\par\medskip}

\newtheorem{thm}{Theorem}[section]

\newtheorem{lem}[thm]{Lemma}


\theoremstyle{definition}


\numberwithin{equation}{section}

\frenchspacing

\parindent=15pt
\topmargin=-0.88cm
\setlength\headsep{.23in}


\begin{document}

\titlerunning{An algorithm for $g$-invariant on unary Hermitian lattices over imaginary quadratic fields}

\title{\textbf{An algorithm for $g$-invariant on unary Hermitian lattices \\ over imaginary quadratic fields}}

\author{Jingbo Liu}

\date{}

\maketitle

\begin{dedication}
Department of Computational, Engineering, and Mathematical Sciences\\
Texas A\&M University-San Antonio, San Antonio, Texas 78224, USA\\
\email{jliu@tamusa.edu}
\end{dedication}


\subjclass{Primary 11E39. Secondary 11Y16, 11Y40.}

\keywords{Waring's problem, unary Hermitian lattices, sums of norms, algorithm.}

\begin{abstract}
Let $E=\mathbb{Q}\big(\sqrt{-d}\big)$ be an imaginary quadratic field for a square-free positive integer $d$, and let $\mathcal{O}$ be its ring of integers.
For each positive integer $m$, let $I_m$ be the free Hermitian lattice over $\mathcal{O}$ with an orthonormal basis, let $\mathfrak{S}_d(1)$ be the set consisting of all positive definite integral unary Hermitian lattices over $\mathcal{O}$ that can be represented by some $I_m$, and let $g_d(1)$ be the least positive integer such that all Hermitian lattices in $\mathfrak{S}_d(1)$ can be uniformly represented by $I_{g_d(1)}$.
The main results of this work provide an algorithm to calculate the explicit form of $\mathfrak{S}_d(1)$ and the exact value of $g_d(1)$ for every imaginary quadratic field $E$, which can be viewed as a natural extension of the Pythagoras number in the lattice setting.
\end{abstract}


\section{Introduction}\label{Sec:Int}
A positive integer $a$ is said to be represented by a quadratic form $f$, (often) written as $a\to f$, provided the Diophantine equation $f(\vec{x})=a$ has an integral solution.
A typical question is to determine those positive integers which can be represented by the form $I_m:=x_1^2+\cdots+x_m^2$ with $m$ a positive integer.
In 1770, Lagrange proved the famous {\sf Four-Square Theorem}: The quadratic form $x_1^2+x_2^2+x_3^2+x_4^2$ can represent all positive integers; such quadratic forms are said to be {\sl universal} in the literature.
Recalling that $7$ cannot be a sum of any three integral squares, it is clear that the sum of four squares is best possible for the universality over $\mathbb{Z}$.

This result has been generalized to several directions, and one important direction is to consider the representation of sums of squares in totally real number fields $F$: A positive definite integral quadratic form over a totally real number field $F$ is said to be universal provided it can represent all totally positive integers in $F$.
In 1928, G\"{o}tzky \cite{fG} proved that $x_1^2+x_2^2+x_3^2+x_4^2$ is universal over $\mathbb{Q}\big(\sqrt{5}\big)$ and later Maa{\ss} \cite{hM} further proved the universality of $x_1^2+x_2^2+x_3^2$ over $\mathbb{Q}\big(\sqrt{5}\big)$, which, however, will not occur in any other field $F$, as Siegel \cite{clS} proved that all totally positive integers in $F$ are sums of integral squares if and only if either $F=\mathbb{Q}$ or $F=\mathbb{Q}\big(\sqrt{5}\big)$.
Recall the {\sl Pythagoras number} $P(\mathcal{O}_F)$ of $F$ is defined as the smallest positive integer $p$ such that every sum of integral squares in $F$ is a sum of $p$ integral squares, where $\mathcal{O}_F$ denotes the ring of integers of $F$.
Noticing Maa{\ss} \cite{hM} and that $1+1+\big(1+\sqrt{5}\big)^2/4$ cannot be a sum of any two integral squares in $F=\mathbb{Q}\big(\sqrt{5}\big)$, one has $P(\mathcal{O}_F)=3$.
For other real quadratic fields $F=\mathbb{Q}\big(\sqrt{d}\big)$ with $d$ a square-free positive integer, it is known $P(\mathcal{O}_F)$ was completely determined by the earlier works of Cohn and Pall, Dzewas, Kneser, Maa{\ss}, and Peters: $P(\mathcal{O}_F)=3$ for $d=2,3$, $P(\mathcal{O}_F)=4$ for $d=6,7$, and $P(\mathcal{O}_F)=5$ for all the remaining cases; see Theorem 3.1 of Kr\'{a}sensk\'{y}, Ra\v{s}ka and Sgallov\'{a} \cite{KRS}.
Some recent important contributions in this direction include Blomer and Kala \cite{BK}, Chan and Icaza \cite{CI}, Kala and Yatsyna \cite{KY1,KY2}, and Kr\'{a}sensk\'{y} and Yatsyna \cite{KY} etc.

In this paper, I shall consider the Hermitian analog of this representation problem over imaginary quadratic fields: Let $E=\mathbb{Q}\big(\sqrt{-d}\big)$ be an imaginary quadratic field with $d$ a square-free positive integer, and denote by $\mathcal{O}$ its ring of integers.
For each positive integer $m$, let $I_m:=x_1\overline{x_1}+\cdots+x_m\overline{x_m}$ be the sum of $m$ norms.
Define the {\sl Pythagoras number} $P(\mathcal{O})$ of $E$ to be the smallest positive integer $p$ such that every sum of integral norms in $E$ is a sum of $p$ integral norms. Kim, Kim and Park \cite[Table 10]{KKP} and Lemma 2.2 in \cite{jL2} completely determined $P(\mathcal{O})$: $P(\mathcal{O})=2$ for $d=1,2,3,7,11$, $P(\mathcal{O})=3$ for $d=5,6,15,19,23,27$, and $P(\mathcal{O})=4$ for all the remaining cases.
Seeing this, a harder problem is addressed here.
In Beli, Chan, Icaza and me \cite{BCIL} and my own work \cite{jL1,jL2}, we studied unary Hermitian lattices--a more general concept than positive integers; as $\mathcal{O}$ needs not to be a principal ideal domain in general, a positive integer $a$ corresponds to a unary Hermitian lattice $\mathcal{O}v$ with the Hermitian map $h$ satisfying $h(v)=a$, but not vice versa.
The standard correspondence between Hermitian forms and free Hermitian lattices implies that $I_m$ corresponds to the free Hermitian lattice $\mathcal{O}v_1+\cdots+\mathcal{O}v_m$ with $I_m$ its associated Hermitian form for the orthonormal basis $\{v_1,\ldots,v_m\}$; so, $I_m$ below denotes both the Hermitian form and its corresponding Hermitian lattice.

A unary Hermitian lattice $L$ is said to be represented by some $I_m$, provided there is an injective linear map from $L$ to $I_m$ preserving the Hermitian maps.
Denote by $\mathfrak{S}_d(1)$ the set consisting of all positive definite integral unary Hermitian lattices over $\mathcal{O}$ which can be represented by some $I_m$, and by $g_d(1)$ the smallest positive integer such that all the lattices in $\mathfrak{S}_d(1)$ can be uniformly represented by $I_{g_d(1)}$.

Over the imaginary quadratic fields $E=\mathbb{Q}\big(\sqrt{-d}\big)$ with class number $1$, {\sl i.e.}, those fields by $d=1,2,3,7,11,19,43,67,163$, each positive definite integral unary Hermitian lattice is of the form $L=\mathcal{O}v$, where $h(v)$ is a positive integer.
$L$ is represented by $I_m$ if and only if $h(v)$ can be written as a sum of $m$ norms of integers in $\mathcal{O}$.
So, $\mathfrak{S}_d(1)$ contains all the positive definite integral unary Hermitian lattices with $g_d(1)=P(\mathcal{O})$.
[In fact, one has $g_d(1)=P(\mathcal{O})=2$ if $d=1,2,3,7,11$, $g_d(1)=P(\mathcal{O})=3$ if $d=19$, and $g_d(1)=P(\mathcal{O})=4$ if $d=43,67,163$.]

On the other hand, for imaginary quadratic fields with class number larger than $1$, \cite[Theorems 1 and 2]{jL2} determined the explicit form of $\mathfrak{S}_d(1)$ and the exact value of $g_d(1)$ for every $E=\mathbb{Q}\big(\sqrt{-d}\big)$ with class number $2$ or $3$.
[Notice that $g_d(1)=P(\mathcal{O})$ in all cases except for $E=\mathbb{Q}\big(\sqrt{-907}\big)$ where $g_{907}(1)=5>P(\mathcal{O})=4$.]
This work, as an essential extension to \cite{jL2}, provides an algorithm with rigorous mathematical proofs, which can determine the explicit form of $\mathfrak{S}_d(1)$ and the exact value of $g_d(1)$ for every imaginary quadratic field $E$ with any given class number.

This paper is organized as follows: In Section 2, I discuss the geometric language of Hermitian spaces and Hermitian lattices, in Section 3, I introduce an algorithm to determine $\mathfrak{S}_d(1)$ and $g_d(1)$ for all imaginary quadratic fields $E$ with rigorous proofs, and finally in Section 4, I provide SageMath codes and work out an example in details using these codes as a demonstration.

\section{Preliminaries}\label{Sec:Pre}
Throughout this paper, the notations and terminologies for lattices from the classical monograph of O'Meara \cite{otO} will be adopted.
For background information and terminologies specific to the Hermitian setting, one may consult the works of Shimura \cite{gS} and Gerstein \cite{ljG}.

Let $E=\mathbb{Q}\big(\sqrt{-d}\big)$ be an imaginary quadratic field for a square-free positive integer $d$, and let $\mathcal{O}$ be its ring of integers.
Then, one has $\mathcal{O}=\mathbb{Z}+\mathbb{Z}\omega$ with $\omega=\omega_d:=\sqrt{-d}$ if $d\equiv1,2~(\mathrm{mod}~4)$ and $\omega=\omega_d:=\big(1+\sqrt{-d}\big)/2$ if $d\equiv3~(\mathrm{mod}~4)$.
For each $\alpha\in E$, denote by $\overline\alpha$ the complex conjugate of $\alpha$ and define the norm of $\alpha$ to be $N(\alpha):=\alpha\overline\alpha$.
A Hermitian space $(V,h)$ is a vector space $V$ over $E$ that admits of a Hermitian map $h:V\times V\to E$ satisfying, for all $\alpha,\beta\in E$ and $v,v_1,v_2,w\in V$,
\begin{description}
\item[~~~~(0.1)] $h(v,w)=\overline{h(w,v)}$;
\item[~~~~(0.2)] $h(\alpha v_1+\beta v_2,w)=\alpha h(v_1,w)+\beta h(v_2,w)$.
\end{description}
Write $h(v):=h(v,v)$ for brevity.
By condition {\bf(0.1)}, one has $h(v)=\overline{h(v)}$, and thus, $h(v)\in\mathbb{Q}$ for every $v\in V$.

A Hermitian lattice $L$ is defined as a finitely generated $\mathcal{O}$-module in the Hermitian space $V$, and $L$ is said to be {\sl a lattice on} $V$ whenever $V=EL$.
We assume in the sequel that all Hermitian lattices $L$ are positive definite integral in the sense that $h(v)\in\mathbb{Z}_{>0}$ for all nonzero elements $v\in L$ and that $h(v,w)\in\mathcal{O}$ for all pairs of elements $v,w\in L$.
$L$ is said to be represented by another Hermitian lattice $K$, written as $L\to K$, provided there exists an injective linear map $\sigma:L\to K$ which preserves Hermitian maps, {\sl i.e.}, $h_L(v,w)=h_K(\sigma(v),\sigma(w))$ for all $v,w\in L$.
We thus call $\sigma$ a representation from $L$ to $K$.
In addition, $L$ and $K$ are said to be {\sl isometric} if $\sigma$ is bijective, and the isometry class containing $L$ is denoted by $\mathrm{cls}(L)$.

Let $V$ be an $n$-dimensional Hermitian space, and let $L$ be a Hermitian lattice on $V$; then, there are fractional $\mathcal{O}$-ideals $\mathfrak{A}_1,\ldots,\mathfrak{A}_n$ and a basis $\{v_1,\ldots,v_n\}$ of $V$ such that $L=\mathfrak{A}_1v_1+\cdots+\mathfrak{A}_nv_n$.
In particular, if we have $\mathfrak{A}_1=\cdots=\mathfrak{A}_n=\mathcal{O}$ for some basis $\{v_1,\ldots,v_n\}$, then we say $L$ a {\sl free} Hermitian lattice and associate to it a Gram matrix $M_L=(h(v_l,v_{l'}))$.
For example, a free unary Hermitian lattice $L=\mathcal{O}v$ with $h(v)=a$ can be identified with the Gram matrix $M_L=\langle a\rangle$, and $L\to I_m$ if and only if one can write $a$ as a sum of $m$ norms of integers in $\mathcal{O}$.

The {\sl scale} $\mathfrak{s}L$ of $L$ is the fractional $\mathcal{O}$-ideal generated by the set $\{h(v,w):v,w\in L\}$, and the {\sl volume} of $L$ is the fractional $\mathcal{O}$-ideal $\mathfrak{v}L:=\big(\mathfrak{A}_1\overline{\mathfrak{A}_1}\big)\cdots\big(\mathfrak{A}_n\overline{\mathfrak{A}_n}\big)\mathrm{det}(h(v_l,v_{l'}))$.
Each $\mathfrak{A}_j$ can be written as a product of integral powers of prime ideals in $\mathcal{O}$.
For each prime ideal $\mathfrak{P}$ in $\mathcal{O}$, denote by $p$ the prime number which lies below $\mathfrak{P}$.
Then, one has $\mathfrak{P}\overline{\mathfrak{P}}=p^2\mathcal{O}$ when $p$ is inert in $E$ and $\mathfrak{P}\overline{\mathfrak{P}}=p\mathcal{O}$ otherwise.
In consequence, there is a unique positive rational number $\delta_L$ (said to be the {\sl discriminant} of $L$) with $\mathfrak{v}L=\delta_L\mathcal{O}$.
Suppose $\mathcal{I}$ is a fractional $\mathcal{O}$-ideal.
$L$ is said to be $\mathcal{I}$-{\sl modular} if $\mathfrak{s}L=\mathcal{I}$ and $\mathfrak{v}L=\mathcal{I}^n$; in particular, $L$ is said to be {\sl unimodular} whenever $\mathcal{I}=\mathcal{O}$.

In \cite{jL2}, I determined the explicit form of $\mathfrak{S}_d(1)$ and the exact value of $g_d(1)$ for all $E=\mathbb{Q}\big(\sqrt{-d}\big)$ with class number $2$ or $3$ using the main strategy as follows.
Choose a representative $\mathfrak{P}$ for each ideal class in $E$: $\mathfrak{P}$ either is $\mathcal{O}$ or is a non-principal prime ideal in $\mathcal{O}$.
Every positive definite integral unary Hermitian lattice lies in the isometry class of $L^r=\mathfrak{P}v^r$ for a positive integer $r$.
When $\mathfrak{P}=\mathcal{O}$, then $h(v^r)=r$ and $L^r\cong\langle r\rangle$ is represented by $I_4$; so, $L^r\in\mathfrak{S}_d(1)$ for all positive integers $r$.
Otherwise, we know $\mathfrak{P}$ is a non-principal prime ideal with $\mathfrak{P}\overline{\mathfrak{P}}=p\mathcal{O}$ ($p$ lying below $\mathfrak{P}$) and $h(v^r)=r/p$.
Write $\mathfrak{P}=\mathcal{O}p+\mathcal{O}(s+t\omega)$ for some $s+t\omega\in\mathcal{O}$.
Using \cite[Lemma 2.1]{jL2}, $L^r$ can be represented by $I_m$ if and only if $r/p=\sum_{\ell=1}^mN(\gamma_\ell/p)$, where $\gamma_\ell:=a_\ell+b_\ell\omega\in\mathcal{O}$, with $a_\ell,b_\ell\in\mathbb{Z}$ for $1\leq\ell\leq m$ satisfying the conditions below
\begin{description}
\item[~~~~(1.1)] $p|(sa_\ell-dtb_\ell)$ and $p|(ta_\ell+sb_\ell)$ when $d\equiv1,2~(\mathrm{mod}~4)$;
\item[~~~~(1.2)] $p\big|\big(sa_\ell-\frac{d+1}{4}tb_\ell\big)$ and $p|(ta_\ell+(s+t)b_\ell)$ when $d\equiv3~(\mathrm{mod}~4)$.
\end{description}

Below, I extend this to imaginary quadratic fields $E=\mathbb{Q}\big(\sqrt{-d}\big)$ with class number $\mathtt{c}\geq4$.
As Lemma 2.2 in \cite{jL2} and Lemma \ref{Lem1} below lead to $4\leq g_d(1)\leq5$, $\mathfrak{S}_d(1)$ consists of all positive definite integral unary Hermitian lattices, except for those that cannot be represented by $I_5$.
For $g_d(1)$, I first observe that $L^r$ can be represented by $I_4$ when $r\geq C$ for a positive integer $C$ depending only on $d$ and $p$.
When $r<C$, we only need to check the representations $\mathfrak{P}v^r\to I_5$ and $\mathfrak{P}v^r\to I_4$: If every positive definite integral unary Hermitian lattice that can be represented by $I_5$ can also be represented by $I_4$, then $g_d(1)=4$; otherwise, $g_d(1)=5$.
I will design an algorithm to completely determine the explicit form of $\mathfrak{S}_d(1)$ and the exact value of $g_d(1)$.

The result below plays a pivotal role in the design of this algorithm, whose proof depends on the classical work of Mordell \cite{ljM}.

\begin{lem}\label{Lem1}
Let $E=\mathbb{Q}\big(\sqrt{-d}\big)$ be an imaginary quadratic field with $d$ a square-free positive integer, and let $\mathcal{O}$ be its ring of integers.
Let $\mathfrak{S}_d(1)$ be the set of all positive definite integral unary Hermitian lattices over $\mathcal{O}$ that can be represented by some $I_m$, and let $g_d(1)$ be the smallest positive integer such that all the lattices in $\mathfrak{S}_d(1)$ can be uniformly represented by $I_{g_d(1)}$.
Then, one has $g_d(1)\leq5$.
\end{lem}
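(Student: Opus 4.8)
The plan is to reduce the bound to Mordell's classical theorem on binary quadratic forms as sums of squares of integral linear forms, where $5$ is the optimal constant. By the structural description recalled above, every positive definite integral unary Hermitian lattice is isometric to some $L^{r}=\mathfrak{P}v^{r}$, with $\mathfrak{P}$ the chosen representative of its ideal class and $r$ a positive integer. If $\mathfrak{P}=\mathcal{O}$, then $L^{r}\cong\langle r\rangle$ and Lagrange's four-square theorem writes $r$ as a sum of four integral squares, hence as a sum of four norms, so $L^{r}\to I_{4}$; nothing more is needed for these lattices. So assume $\mathfrak{P}$ is a non-principal prime ideal with $\mathfrak{P}\overline{\mathfrak{P}}=p\mathcal{O}$ and $h(v^{r})=r/p$.

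I would then pass from Hermitian representations to a purely binary question. By \cite[Lemma~2.1]{jL2}, a representation $L^{r}\to I_{m}$ is the same as an identity $r/p=\sum_{\ell=1}^{m}N(\gamma_{\ell}/p)$ with $\gamma_{\ell}\in\mathcal{O}$ subject to the congruences (1.1)/(1.2); a direct check shows that these congruences describe exactly the elements of the ideal $\overline{\mathfrak{P}}$. Writing each $\gamma_{\ell}$ in a fixed $\mathbb{Z}$-basis $\{\beta_{1},\beta_{2}\}$ of $\overline{\mathfrak{P}}$, say $\gamma_{\ell}=c_{\ell}\beta_{1}+d_{\ell}\beta_{2}$, and using $N(x\beta_{1}+y\beta_{2})=p\,q(x,y)$ for the primitive positive definite integral binary form $q$ of discriminant $\mathrm{disc}(E)$ attached to the class of $\overline{\mathfrak{P}}$, the identity $\sum_{\ell}N(\gamma_{\ell})=pr$ becomes $r=\sum_{\ell=1}^{m}q(c_{\ell},d_{\ell})$. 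Thus $L^{r}\in\mathfrak{S}_{d}(1)$ if and only if $r$ is a sum of finitely many values of $q$, and the lemma reduces to the statement that every such $r$ is already a sum of at most five values of $q$.

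To finish I would use the dictionary between sums of values of $q$ and representations of binary $\mathbb{Z}$-lattices by sums of squares. Given $r=\sum_{\ell=1}^{m_{0}}q(\mathbf{v}_{\ell})$ with $\mathbf{v}_{\ell}\in\mathbb{Z}^{2}$, set $W:=\sum_{\ell=1}^{m_{0}}\mathbf{v}_{\ell}\mathbf{v}_{\ell}^{\mathrm{T}}$; then $W$ is a nonzero positive semidefinite symmetric integer matrix with $\mathrm{tr}(QW)=r$, where $Q$ is the Gram matrix of $q$, and the expression $W=\sum_{\ell}\mathbf{v}_{\ell}\mathbf{v}_{\ell}^{\mathrm{T}}$ exhibits the binary $\mathbb{Z}$-lattice with Gram matrix $W$ as a sublattice of $I_{m_{0}}$. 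If $W$ has rank $2$, Mordell's theorem \cite{ljM} — that a binary $\mathbb{Z}$-lattice represented by some $I_{k}$ is represented by $I_{5}$ — gives $W=\sum_{\ell=1}^{5}\mathbf{u}_{\ell}\mathbf{u}_{\ell}^{\mathrm{T}}$ with $\mathbf{u}_{\ell}\in\mathbb{Z}^{2}$, whence $r=\mathrm{tr}(QW)=\sum_{\ell=1}^{5}q(\mathbf{u}_{\ell})$ and $L^{r}\to I_{5}$; if $W$ has rank $1$, then all $\mathbf{v}_{\ell}$ are parallel and $r=\lambda\,q(\mathbf{w})$ with $\lambda$ a sum of squares, so Lagrange again expresses $r$ as a sum of four values of $q$. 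Together with the principal case, this yields $g_{d}(1)\leq 5$.

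The step I expect to demand the most care is the translation in the second and third paragraphs: one must verify that the congruences (1.1)/(1.2) cut out $\overline{\mathfrak{P}}$ precisely — so that $q$ comes out primitive of discriminant exactly $\mathrm{disc}(E)$, rather than imprimitive or of a proper multiple of it — and that the passage to the matrix $W$ and back is faithful, with the degenerate case $W$ of rank $\leq 1$ treated separately as above. Once this bookkeeping is settled, all of the arithmetic content is carried by Mordell's binary theorem, and the constant $5$ is exactly what it delivers; since that constant is already attained for binary forms, one should not expect a smaller universal bound here (consistent with the occurrence $g_{907}(1)=5$ mentioned in the introduction).
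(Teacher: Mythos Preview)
Your proposal is correct and follows essentially the same route as the paper: both reduce the claim to Mordell's theorem that a binary positive definite integral $\mathbb{Z}$-lattice represented by some $I_m^{\mathbb{Z}}$ is already represented by $I_5^{\mathbb{Z}}$. The paper argues abstractly --- writing $L=\mathfrak{U}^{-1}v\subseteq I_m$ with $v=\gamma_1w_1+\gamma_2w_2$ for a $\mathbb{Z}$-basis $\{\gamma_1,\gamma_2\}$ of $\mathfrak{U}$ and $w_1,w_2\in\mathbb{Z}^m$, then applying Mordell to the rank-$\le 2$ $\mathbb{Z}$-lattice $\mathbb{Z}w_1+\mathbb{Z}w_2\subseteq I_m^{\mathbb{Z}}$ --- whereas you pass through the explicit binary form $q$ attached to the ideal class and the matrix $W=\sum_\ell\mathbf{v}_\ell\mathbf{v}_\ell^{\mathrm{T}}$; your $W$ is precisely the Gram matrix of the paper's $\{w_1,w_2\}$, so the two arguments are the same computation in different coordinates.

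One remark: the bookkeeping you flag in your last paragraph is lighter than you suggest. Mordell's theorem is applied to $W$, not to $q$, and all it needs is that $W$ is a positive (semi)definite integral $2\times 2$ matrix expressible as $\sum_\ell\mathbf{v}_\ell\mathbf{v}_\ell^{\mathrm{T}}$ with $\mathbf{v}_\ell\in\mathbb{Z}^2$ --- this you have by construction. Whether $q$ is primitive or has discriminant exactly $\mathrm{disc}(E)$ is irrelevant to the argument; the identity $r=\mathrm{tr}(QW)=\sum_\ell q(\mathbf{u}_\ell)$ holds for any symmetric $Q$. The paper's abstract phrasing sidesteps this concern entirely, which is its main economy over your version.
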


\begin{proof}
Assume $L\in\mathfrak{S}_d(1)$ is a lattice represented by the Hermitian $\mathcal{O}$-lattice $I_m$ for an $m\in\mathbb{N}$.
Without loss of generality, suppose $L$ is a sublattice of $I_m$.
Then, we can write $L=\mathfrak{U}^{-1}v$ with $\mathfrak{U}$ an integral ideal and $v\subseteq\mathfrak{U}^m$, as $\mathfrak{U}^{-1}v\subseteq\mathcal{O}^m$.
Write $\mathfrak{U}=\mathbb{Z}\gamma_1+\mathbb{Z}\gamma_2$ for $\gamma_1,\gamma_2\in\mathfrak{U}$.
So, $v=\gamma_1w_1+\gamma_2w_2$ for two vectors $w_1,w_2\in\mathbb{Z}^m$.
Now, consider the quadratic $\mathbb{Z}$-lattice $L^{\mathbb{Z}}=\mathbb{Z}w_1+\mathbb{Z}w_2$, a rank (at most) $2$ sublattice of the quadratic $\mathbb{Z}$-lattice $I_m^{\mathbb{Z}}$, which clearly can be represented by $I_5^{\mathbb{Z}}$ noting Mordell \cite{ljM}.

We claim there is a representation $\varphi$ sending $L$ to $I_5$.
Let $w_1',w_2'$ be the images of $w_1,w_2$ in $I_5^{\mathbb{Z}}$, respectively.
Then, $v'=\gamma_1w_1'+\gamma_2w_2'\in\mathfrak{U}^5$ is a vector in the Hermitian $\mathcal{O}$-lattice $I_5$.
Set $\varphi(v)=v'$ to see, with $(~)^t$ being transpose,
\begin{equation}\label{Eq2.1}
\begin{aligned}
h(v)&=(\gamma_1w_1+\gamma_2w_2)^t\cdot\overline{(\gamma_1w_1+\gamma_2w_2)}\\
    &=(\gamma_1~~\gamma_2)\bigg(\begin{array}{c}
                                (w_1)^t \\
                                (w_2)^t
                                \end{array}\bigg)
      (w_1~~w_2)\bigg(\begin{array}{c}
                      \overline{\gamma_1} \\
                      \overline{\gamma_2}
                      \end{array}\bigg)\\
    &=(\gamma_1~~\gamma_2)\bigg(\begin{array}{c}
                                \big(w_1'\big)^t \\
                                \big(w_2'\big)^t
                                \end{array}\bigg)
      \big(w_1'~~w_2'\big)\bigg(\begin{array}{c}
                                \overline{\gamma_1} \\
                                \overline{\gamma_2}
                                \end{array}\bigg)\\
    &=\big(\gamma_1w_1'+\gamma_2w_2'\big)^t\cdot\overline{\big(\gamma_1w_1'+\gamma_2w_2'\big)}=H(v').
\end{aligned}
\end{equation}
We verified $h(v)=H(v')$ for Hermitian maps $h$ and $H$ on $L$ and $I_5$, respectively, and therefore, we have $L\to I_5$.
Consequently, $g_d(1)\leq5$.
\end{proof}

The next lemma says that for any two prime ideals in $\mathcal{O}$ which lie above the same prime number $p$, we only need to consider one of them.

\begin{lem}\label{Lem2}
Let $E=\mathbb{Q}\big(\sqrt{-d}\big)$ be an imaginary quadratic field with $d$ a square-free positive integer, let $p$ be a prime number splitting in $E$, and let $\mathfrak{P}_1,\mathfrak{P}_2$ be prime ideals in $\mathcal{O}$ lying above $p$.
Then, $r_1,r_2$ assume the same set of values in which $\mathfrak{P}_1v_1^{r_1},\mathfrak{P}_2v_2^{r_2}\in\mathfrak{S}_d(1)$, and there is a transformation between their representations by $I_m$.
\end{lem}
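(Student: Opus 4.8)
The plan is to exploit the fact that $\mathfrak{P}_1$ and $\mathfrak{P}_2$ lie above the same split prime $p$, so that $\mathfrak{P}_2 = \overline{\mathfrak{P}_1}$ (since $\mathfrak{P}_1\overline{\mathfrak{P}_1} = p\mathcal{O}$ and $\mathfrak{P}_1 \neq \mathfrak{P}_2$ are the only two primes above $p$). Complex conjugation is a ring automorphism of $\mathcal{O}$ that preserves the norm form, so it induces an isometry of $\mathcal{O}$-lattices $I_m \to I_m$ (acting coordinatewise) and carries $\mathfrak{P}_1$ to $\mathfrak{P}_2$. I would first set up the lattices concretely: $\mathfrak{P}_1 v_1^{r_1}$ has scale $\mathfrak{P}_1\overline{\mathfrak{P}_1}\,h(v_1^{r_1})\mathcal{O}$ and discriminant $r_1/p$ as in the running setup, and similarly for $\mathfrak{P}_2 v_2^{r_2}$; applying conjugation to a generating vector of $\mathfrak{P}_1 v_1^{r_1}$ inside some $I_m$ produces a vector generating an $\mathcal{O}$-lattice isometric to $\mathfrak{P}_2 v_2^{r_1}$.

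The key steps, in order, are: (i) observe $\mathfrak{P}_2 = \overline{\mathfrak{P}_1}$; (ii) check that the coordinatewise conjugation map $c\colon I_m \to I_m$, $c(x_1,\dots,x_m) = (\overline{x_1},\dots,\overline{x_m})$, is a bijective Hermitian isometry, using $h(c(v),c(w)) = \sum \overline{x_\ell}\,\overline{\overline{y_\ell}} = \overline{\sum x_\ell \overline{y_\ell}} = \overline{h(v,w)}$, and noting that a representation preserving the conjugate Hermitian form is interchangeable with one preserving $h$ for the purpose of membership in $\mathfrak{S}_d(1)$ (the conjugate of a positive definite integral unary Hermitian lattice is again one, with the same value $r/p$); (iii) conclude that if $\mathfrak{P}_1 v_1^{r} \hookrightarrow I_m$ via some $\sigma$, then $c\circ\sigma$ realizes $\mathfrak{P}_2 v_2^{r} \hookrightarrow I_m$, and symmetrically, so the admissible sets of $r$ coincide and $r_1 = r_2$; (iv) read off the explicit transformation between the two representations at the level of the coordinate vectors $\gamma_\ell = a_\ell + b_\ell\omega$: conjugation sends $a_\ell + b_\ell\omega_d$ to $a_\ell - b_\ell\sqrt{-d}$ (for $d\equiv 1,2$) or to $a_\ell + b_\ell - b_\ell\omega_d$ (for $d\equiv 3$), which translates conditions \textbf{(1.1)}/\textbf{(1.2)} for $\mathfrak{P}_1 = \mathcal{O}p + \mathcal{O}(s+t\omega)$ into the corresponding conditions for $\mathfrak{P}_2 = \mathcal{O}p + \mathcal{O}(\overline{s+t\omega})$.

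I would spend most of the writing on step (iv), making the bookkeeping between $\mathfrak{P}_1$ and $\overline{\mathfrak{P}_1}$ fully explicit in both congruence-class cases, since that is exactly what the algorithm later needs — a concrete recipe converting one representation into the other, not merely an abstract isometry. The main obstacle I anticipate is purely notational rather than conceptual: keeping the generator $s+t\omega$ of $\mathfrak{P}_1$ consistent with its conjugate $\overline{s+t\omega}$, which equals $s - t\sqrt{-d} = (s) + (-t)\omega_d$ when $d\equiv 1,2$ but equals $(s+t) + (-t)\omega_d$ when $d\equiv 3$ because $\overline{\omega_d} = 1 - \omega_d$; getting these substitutions exactly right in conditions \textbf{(1.1)} and \textbf{(1.2)} is where an error would most easily creep in. No deep input is required beyond \cite[Lemma 2.1]{jL2} already invoked in the setup and the elementary fact that conjugation is a norm-preserving automorphism.
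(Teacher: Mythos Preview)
Your approach is correct and coincides with the paper's: the paper's explicit transformation $(a_\ell,b_\ell)\mapsto(a_\ell,-b_\ell)$ for $d\equiv1,2\pmod4$ is precisely coordinatewise complex conjugation, and for $d\equiv3\pmod4$ the paper simply cites \cite[Lemma~4.1]{jL2}, which is the same computation you outline in step~(iv). One terminological cleanup: $c$ is an \emph{anti}-isometry ($h(c(v),c(w))=\overline{h(v,w)}$), not an isometry as you first call it, but as you already observe this is harmless because the image $\mathfrak{P}_2\cdot c(\sigma(v_1^r))$ is an $\mathcal{O}$-sublattice of $I_m$ whose inclusion furnishes the required $\mathcal{O}$-linear representation of $\mathfrak{P}_2 v_2^r$.
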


\begin{proof}
When $d\equiv3~(\mathrm{mod}~4)$, Proposition 8.3 in \cite{jN} leads to $\mathfrak{P}_1=\big(p,-\frac{n+1}{2}+\omega\big)$ and $\mathfrak{P}_2=\big(p,\frac{n-1}{2}+\omega\big)$ for the smallest positive odd integer $n$ with $-d\equiv n^2~(\mathrm{mod}~p)$; the proof of the stated result in Lemma \ref{Lem2} was given in \cite[Lemma 4.1]{jL2}.

Below, we discuss the case of $d\equiv1,2~(\mathrm{mod}~4)$.
Notice only odd prime numbers now split and $p\hspace{-0.8mm}\nmid\hspace{-0.8mm}d$.
By Theorem 25 in \cite{dM}, $\mathfrak{P}_1=(p,n+\omega)$ and $\mathfrak{P}_2=(p,n-\omega)$ are prime ideals in $\mathcal{O}$ lying above $p$ for the least positive integer $n$ with $-d\equiv n^2~(\mathrm{mod}~p)$.
Recall here that $\mathfrak{P}_jv_j^{r_j}\to I_m$ if and only if $r_j/p=\sum_{\ell=1}^mN(\gamma_\ell/p)$ for $j=1,2$, where $\gamma_\ell=a_\ell+b_\ell\omega\in\mathcal{O}$ with $a_\ell,b_\ell$ satisfying $p|(na_\ell-db_\ell)$ and $p|(a_\ell+nb_\ell)$ for $\mathfrak{P}_1$ and $p|(na_\ell+db_\ell)$ and $p|(a_\ell-nb_\ell)$ for $\mathfrak{P}_2$ by \cite[Lemma 2.1]{jL2}.
One easily verifies that $p|(a_\ell+nb_\ell)$ and $p|(a_\ell-nb_\ell)$ lead to $p|(na_\ell-db_\ell)$ and $p|(na_\ell+db_\ell)$, respectively.
So, when calculating the values of $r_1,r_2$ where $\mathfrak{P}_1v_1^{r_1},\mathfrak{P}_2v_2^{r_2}\in\mathfrak{S}_d(1)$, we only need the conditions $p|(a_\ell+nb_\ell)$ for $\mathfrak{P}_1$ and $p|(a_\ell-nb_\ell)$ for $\mathfrak{P}_2$.

For every pair of integers $a_\ell,b_\ell$ with $p|(a_\ell+nb_\ell)$, take $a'_\ell=a_\ell$ and $b'_\ell=-b_\ell$ to derive $p\big|\big(a'_\ell-nb'_\ell\big)$ and $N\Big(\frac{a'_\ell+b'_\ell\omega}{p}\Big)=N\Big(\frac{a_\ell+b_\ell\omega}{p}\Big)$, where $N\big(\frac{a+b\omega}{p}\big):=\frac{a^2}{p^2}+\frac{db^2}{p^2}$ here.
Conversely, choose $a_\ell=a'_\ell$ and $b_\ell=-b'_\ell$ for every pair of integers $a'_\ell,b'_\ell$ satisfying $p\big|\big(a'_\ell-nb'_\ell\big)$ to see $p|(a_\ell+nb_\ell)$ and $N\Big(\frac{a_\ell+b_\ell\omega}{p}\Big)=N\Big(\frac{a'_\ell+b'_\ell\omega}{p}\Big)$.
Thus, $r_1,r_2$ assume the same set of values in which $\mathfrak{P}_1v_1^{r_1},\mathfrak{P}_2v_2^{r_2}$ belong to $\mathfrak{S}_d(1)$, and $\mathfrak{P}_1v_1^{r_1},\mathfrak{P}_2v_2^{r_2}$ are represented by the same $I_m$ when $r_1=r_2$.
\end{proof}

\section{An algorithm}\label{Sec:Alg} 
Let $E=\mathbb{Q}\big(\sqrt{-d}\big)$ be an imaginary quadratic field with class number $\mathtt{c}\geq4$, and let $\mathcal{O}$ be its ring of integers.
When $\mathfrak{P}=\mathcal{O}$, then $\mathfrak{P}v^r\to I_4$ for all positive integers $r$.
So, we assume $\mathfrak{P}$ is a non-principal prime ideal in $\mathcal{O}$ and $p$ is the prime number lying below $\mathfrak{P}$, and prove in cases that $\mathfrak{P}v^r\to I_4$ provided $r$ is no less than a given integer $C$.
As Lemma 2.2 in \cite{jL2} and Lemma \ref{Lem1} here yield $4\leq g_d(1)\leq5$, we will utilize software to check the representations $\mathfrak{P}v^r\to I_4$ and $\mathfrak{P}v^r\to I_5$ when $r<C$.

Let $E(p)$ be the set of positive integers $r$ such that $\mathfrak{P}v^r$ cannot be represented by $I_5$, and write $g(p):=4$ if every unary Hermitian lattice $\mathfrak{P}v^r$ that can be represented by $I_5$ can also be represented by $I_4$; otherwise, write $g(p):=5$.
$\mathfrak{S}_d(1)$ consists of all the positive definite integral unary Hermitian lattices except for $\mathfrak{P}v^r$ with $r\in E(p)$, and $g_d(1)$ corresponds to the largest value of $g(p)$ when $p$ runs through all the primes lying below non-principal prime representatives of ideal classes of $E$.

\begin{thm}\label{Thm1}
Assume that $\mathfrak{P}$ is a non-principal prime ideal in $E$ and $p$ is the prime number lying below $\mathfrak{P}$.
When $p|d$, the unary Hermitian lattice $\mathfrak{P}v^r$ is represented by $I_4$ for every positive integer $r\geq C:=(p-1)d/p$.
\end{thm}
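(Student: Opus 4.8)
The strategy is to reduce the representation $\mathfrak{P}v^r\to I_4$ to an elementary question about the numerical semigroup generated by $p$ and $d/p$, and then to combine Lagrange's four-square theorem with Sylvester's classical bound for the Frobenius number of two coprime integers. The constant $C=(p-1)d/p$ is not meant to be sharp; it will merely be a convenient upper bound for the true threshold.

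First I would make the criterion of \cite[Lemma 2.1]{jL2} fully explicit in the present ramified case. Since $p\mid d$, the prime $p$ ramifies in $E$, so $\mathfrak{P}\overline{\mathfrak{P}}=\mathfrak{P}^2=p\mathcal{O}$, and the ideal representative can be written down directly: $\mathfrak{P}=\mathcal{O}p+\mathcal{O}\omega$ when $d\equiv1,2~(\mathrm{mod}~4)$ (because $\omega^2=-d\equiv0~(\mathrm{mod}~p)$), and $\mathfrak{P}=\mathcal{O}p+\mathcal{O}\big(\omega-\tfrac{p+1}{2}\big)$ when $d\equiv3~(\mathrm{mod}~4)$ (because the minimal polynomial $x^{2}-x+\tfrac{d+1}{4}$ of $\omega$ reduces to $\big(x-\tfrac{p+1}{2}\big)^2$ modulo the odd prime $p$). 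Substituting the resulting $s,t$ into conditions \textbf{(1.1)} and \textbf{(1.2)} collapses them. For $d\equiv1,2~(\mathrm{mod}~4)$ the only surviving condition is $p\mid a_\ell$; writing $a_\ell=pc_\ell$ and using $N(\gamma_\ell/p)=(a_\ell^2+db_\ell^2)/p^2$ turns $\mathfrak{P}v^r\to I_m$ into the statement that
\[
r=p\sum_{\ell=1}^{m}c_\ell^{2}+\frac{d}{p}\sum_{\ell=1}^{m}b_\ell^{2}\qquad\text{for some }c_\ell,b_\ell\in\mathbb{Z}.
\]
For $d\equiv3~(\mathrm{mod}~4)$ the surviving condition is $p\mid(2a_\ell+b_\ell)$; writing $2a_\ell+b_\ell=pe_\ell$ (so that $e_\ell\equiv b_\ell~(\mathrm{mod}~2)$, since $p$ is odd) and using $N(\gamma_\ell/p)=\big((2a_\ell+b_\ell)^2+db_\ell^2\big)/(4p^2)$ turns $\mathfrak{P}v^r\to I_m$ into the statement that
\[
4r=p\sum_{\ell=1}^{m}e_\ell^{2}+\frac{d}{p}\sum_{\ell=1}^{m}b_\ell^{2}\qquad\text{for some }e_\ell,b_\ell\in\mathbb{Z}\text{ with }e_\ell\equiv b_\ell~(\mathrm{mod}~2).
\]

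Next I would show that the numerical semigroup $S:=\big\{\,pa+\tfrac{d}{p}b:a,b\in\mathbb{Z}_{\ge0}\,\big\}$ is contained in the set of $r$ for which $\mathfrak{P}v^r\to I_4$. Given $r=pa+\tfrac{d}{p}b\in S$, Lagrange's four-square theorem provides $a=c_1^2+c_2^2+c_3^2+c_4^2$ and $b=b_1^2+b_2^2+b_3^2+b_4^2$. If $d\equiv1,2~(\mathrm{mod}~4)$ this is already a representation with $m=4$. If $d\equiv3~(\mathrm{mod}~4)$ I instead use $4a=(2c_1)^2+\cdots+(2c_4)^2$ and $4b=(2b_1)^2+\cdots+(2b_4)^2$: all eight entries are even, so the parity requirement $e_\ell\equiv b_\ell~(\mathrm{mod}~2)$ holds, and $4r=p(4a)+\tfrac{d}{p}(4b)$ is the required representation. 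In either case $\mathfrak{P}v^r\to I_4$. Finally, since $d$ is square-free with $p\mid d$, the integers $p$ and $d/p$ are coprime, so Sylvester's bound gives $S\supseteq\big\{\,r\in\mathbb{Z}:r\ge (p-1)\big(\tfrac{d}{p}-1\big)\,\big\}$; as $(p-1)\big(\tfrac{d}{p}-1\big)=\tfrac{(p-1)d}{p}-(p-1)\le C$, every integer $r\ge C$ lies in $S$, and therefore $\mathfrak{P}v^r\to I_4$.

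I expect the only delicate point to be the bookkeeping behind the second displayed criterion: for $d\equiv3~(\mathrm{mod}~4)$ one must carry the parity coupling $e_\ell\equiv b_\ell~(\mathrm{mod}~2)$ correctly through the substitution $2a_\ell+b_\ell=pe_\ell$ and confirm that the ``all even'' representations used above satisfy it (where the oddness of $p$ is essential). Beyond that, the argument is a direct combination of Lagrange's theorem with the two-generator Frobenius bound, so I anticipate no further obstacles.
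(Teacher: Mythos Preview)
Your proposal is correct and follows essentially the same approach as the paper: in both cases the condition from \cite[Lemma~2.1]{jL2} collapses to $p\mid a_\ell$ (resp.\ $p\mid(2a_\ell+b_\ell)$), and the representation reduces to writing $r$ as $p\sum c_\ell^{2}+\tfrac{d}{p}\sum b_\ell^{2}$, after which Lagrange's theorem and the coprimality of $p$ and $d/p$ finish the job. The only cosmetic differences are that the paper uses the generator $\sqrt{-d}=-1+2\omega$ rather than your $\omega-\tfrac{p+1}{2}$ for $\mathfrak{P}$ when $d\equiv3~(\mathrm{mod}~4)$, makes the substitution $a_\ell=p\tilde a_\ell-\tilde b_\ell$, $b_\ell=2\tilde b_\ell$ directly (which is exactly your ``all even'' choice $e_\ell=2c_\ell$, $b_\ell=2b'_\ell$ once you unwind $a_\ell=(pe_\ell-b_\ell)/2$), and argues the final step via the complete residue system $\{0,d/p,\ldots,(p-1)d/p\}$ modulo $p$ rather than by naming Sylvester's Frobenius bound.
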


\begin{proof}
\noindent{\bf Case 1:} $d\equiv1,2~(\mathrm{mod}~4)$.
Then, $\mathcal{O}=\mathbb{Z}+\mathbb{Z}\omega$ for $\omega=\sqrt{-d}$.
By Theorem 25 in \cite{dM}, we have $\mathfrak{P}=\mathcal{O}p+\mathcal{O}\omega$.
Thus, it follows from \cite[Lemma 2.1]{jL2} that
\begin{equation*}
\omega\gamma_\ell=\omega(a_\ell+b_\ell\omega)=-db_\ell+a_\ell\omega
\end{equation*}
for each $1\leq\ell\leq4$, which leads to $p|a_\ell$ and $b_\ell$ arbitrary; so, one has
\begin{equation*}
h(v^{r})=\frac{r}{p}=\sum_{\ell=1}^4N\Big(\frac{\gamma_\ell}{p}\Big)
=\sum_{\ell=1}^4\bigg(\frac{a_\ell^2}{p^2}+\frac{db_\ell^2}{p^2}\bigg)=:\sum_{\ell=1}^4P_d(a_\ell,b_\ell).
\end{equation*}
Choose $a_\ell=p\tilde{a}_\ell$ with $b_\ell$ an arbitrary integer for $1\leq\ell\leq4$ to observe
\begin{equation}\label{Eq3.1}
r=p\sum_{\ell=1}^4P_d(a_\ell,b_\ell)=p\sum_{\ell=1}^4\bigg(\tilde{a}^2_\ell+\frac{db_\ell^2}{p^2}\bigg)
=\sum_{\ell=1}^4\Big(p\tilde{a}^2_\ell+\frac{d}{p}b_\ell^2\Big).
\end{equation}

\noindent{\bf Case 2:} $d\equiv 3~(\mathrm{mod}~4)$.
Then, $\mathcal{O}=\mathbb{Z}+\mathbb{Z}\omega$ for $\omega=\big(1+\sqrt{-d}\big)/2$.
By Theorem 25 in \cite{dM}, we get $\mathfrak{P}=\mathcal{O}p+\mathcal{O}\sqrt{-d}$.
So, it follows from \cite[Lemma 2.1]{jL2} that
\begin{equation*}
(-1+2\omega)\gamma_\ell=(-1+2\omega)(a_\ell+b_\ell\omega)=-\Big(a_\ell+\frac{d+1}{2}b_\ell\Big)+(2a_\ell+b_\ell)\omega
\end{equation*}
for each $1\leq\ell\leq 4$, which leads to $p|(2a_\ell+b_\ell)$; therefore, one has
\begin{equation*}
h(v^{r})=\frac{r}{p}=\sum_{\ell=1}^4N\Big(\frac{\gamma_\ell}{p}\Big)
=\sum_{\ell=1}^4\bigg(\frac{a_\ell^2}{p^2}+\frac{a_\ell b_\ell}{p^2}+\frac{(d+1)b_\ell^2}{4p^2}\bigg)=:\sum_{\ell=1}^4P_d(a_\ell,b_\ell).
\end{equation*}
Take $a_\ell=p\tilde{a}_\ell-\tilde{b}_\ell,b_\ell=2\tilde{b}_\ell$ with $\tilde{a}_\ell,\tilde{b}_\ell$ arbitrary integers for $1\leq\ell\leq4$ to observe
\begin{equation}\label{Eq3.2}
\begin{aligned}
r&=p\sum_{\ell=1}^4P_d(a_\ell,b_\ell)=p\sum_{\ell=1}^4\frac{1}{p^2}\Big(a_\ell^2+a_\ell b_\ell+\frac{d+1}{4}b_\ell^2\Big)\\
&=\sum_{\ell=1}^4\frac{1}{p}\big(\big(p\tilde{a}_\ell-\tilde{b}_\ell\big)^2+2\tilde{b}_\ell\big(p\tilde{a}_\ell-\tilde{b}_\ell\big)+(d+1)\tilde{b}^2_\ell\big)
=\sum_{\ell=1}^4\Big(p\tilde{a}^2_\ell+\frac{d}{p}\tilde{b}^2_\ell\Big).
\end{aligned}
\end{equation}

In equalities \eqref{Eq3.1} and \eqref{Eq3.2}, $d/p$ is an integer relatively prime to $p$, and therefore, $\{0,d/p,2d/p,\ldots,(p-1)d/p\}$ is a complete set of representatives of $\mathbb{Z}/p\mathbb{Z}$, with $r$ represented by $\displaystyle{p\sum_{\ell=1}^4\tilde{a}^2_\ell+\frac{d}{p}\sum_{\ell=1}^4\tilde{b}^2_\ell}$ for all $r\geq(p-1)d/p$.
\end{proof}

\begin{thm}\label{Thm2}
Suppose that $\mathfrak{P}$ is a non-principal prime ideal in $E$ and $2$ is the prime number lying below $\mathfrak{P}$.
When $d$ is odd, the unary Hermitian lattice $\mathfrak{P}v^r$ is represented by $I_4$ for every positive integer $r\geq C:=(d+1)/2$.
\end{thm}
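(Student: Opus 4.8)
The plan is to run the same machinery as in Theorem~\ref{Thm1}: use \cite[Lemma 2.1]{jL2} to turn $\mathfrak{P}v^r\to I_4$ into a sum of four norms constrained by conditions {\bf (1.1)}/{\bf (1.2)}, then solve it with classical representation theorems. First I would dispose of the inert case: if $d\equiv3~(\mathrm{mod}~8)$ then $2$ is inert in $E$, the only prime above $2$ is $2\mathcal{O}$, which is principal, so no such $\mathfrak{P}$ exists and the claim is vacuous. It remains to treat $d\equiv1~(\mathrm{mod}~4)$, where $2$ ramifies, and $d\equiv7~(\mathrm{mod}~8)$, where $2$ splits; in the split case Lemma~\ref{Lem2} lets me replace $\mathfrak{P}$ by whichever prime above $2$ is convenient. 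In both cases $\mathfrak{P}\overline{\mathfrak{P}}=2\mathcal{O}$ and $h(v^r)=r/2$; writing $\mathfrak{P}=\mathcal{O}\cdot2+\mathcal{O}(s+t\omega)$ one gets $s=t=1$, $\omega=\sqrt{-d}$ when $d\equiv1~(\mathrm{mod}~4)$, and $s=0$, $t=1$, $\omega=(1+\sqrt{-d})/2$ when $d\equiv7~(\mathrm{mod}~8)$ (taking $\mathfrak{P}=(2,\omega)$). Substituting into {\bf (1.1)}/{\bf (1.2)} and using that $d$ is odd---and that $(d+1)/4$ is even when $d\equiv7~(\mathrm{mod}~8)$---both systems collapse to the single condition $a_\ell\equiv b_\ell~(\mathrm{mod}~2)$ for $1\le\ell\le4$, so \cite[Lemma 2.1]{jL2} reduces the theorem to finding, for each $r\ge(d+1)/2$, integers $a_\ell,b_\ell$ with $a_\ell\equiv b_\ell~(\mathrm{mod}~2)$ and
\[
2r=\sum_{\ell=1}^{4}N(a_\ell+b_\ell\omega).
\]

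When $d\equiv7~(\mathrm{mod}~8)$ I would take $b_\ell=1$ and $a_\ell$ odd for all four indices. Then $N(a_\ell+\omega)=a_\ell^2+a_\ell+\tfrac{d+1}{4}=a_\ell(a_\ell+1)+\tfrac{d+1}{4}$, and since $\tfrac12a_\ell(a_\ell+1)$ runs over \emph{all} triangular numbers as $a_\ell$ runs over the odd integers, the equation becomes $r-\tfrac{d+1}{2}=T_1+T_2+T_3+T_4$ with each $T_j$ triangular. By Gauss's three-triangular-number theorem every nonnegative integer is a sum of three, hence of four, triangular numbers, so this is solvable precisely for $r\ge(d+1)/2$.

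When $d\equiv1~(\mathrm{mod}~4)$ one has $N(a_\ell+b_\ell\omega)=a_\ell^2+d\,b_\ell^2$, and I would split on the parity of $r$. If $r$ is even, take all $b_\ell=0$ and all $a_\ell=2c_\ell$ even, reducing the equation to $r/2=c_1^2+c_2^2+c_3^2+c_4^2$, which Lagrange's four-square theorem solves for every even $r$. If $r$ is odd, take $b_1=1$ with $a_1$ odd and $b_2=b_3=b_4=0$ with $a_2,a_3,a_4$ even; writing $a_1^2=8T+1$ with $T$ triangular, the equation becomes $\tfrac{2r-(d+1)}{4}=2T+(c_2^2+c_3^2+c_4^2)$, whose left-hand side is an integer because $d\equiv1~(\mathrm{mod}~4)$ and $r$ is odd, and is nonnegative precisely when $r\ge(d+1)/2$. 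Setting $M:=\tfrac{2r-(d+1)}{4}$, it then suffices that at least one of $M$, $M-2$ be a sum of three squares, which one arranges with $T=0$ or $T=1$ respectively.

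The one step carrying genuine content is this last observation: by Legendre's three-square theorem the only obstructions are the integers $4^{a}(8b+7)$; if $M$ has that shape then $M\ge7$, while $M-2$ satisfies $M-2\not\equiv7~(\mathrm{mod}~8)$ and---when $4\mid M$---is not divisible by $4$, hence $M-2\ge5$ is a sum of three squares. Together with the parity split this closes the $d\equiv1~(\mathrm{mod}~4)$ case. I would expect the main care to go into (i) checking that {\bf (1.1)}/{\bf (1.2)} really collapse to $a_\ell\equiv b_\ell~(\mathrm{mod}~2)$ after inserting the explicit $s,t,\omega$, and (ii) verifying that the inequality forced by nonnegativity is exactly $r\ge(d+1)/2$ and not something weaker; everything else is a mechanical unwinding together with the theorems of Lagrange, Gauss, and Legendre, which is what keeps the constant $C$ fully explicit.
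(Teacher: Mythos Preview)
Your argument is correct, and it reaches the same bound $C=(d+1)/2$ by a somewhat different route from the paper. The paper first invokes \cite[Lemma 2.3]{jL2} to dispose of all even $r$ in one stroke, and then, for odd $r$, proves the sharper statement $\mathfrak{P}v^r\to I_3$: when $d\equiv1~(\mathrm{mod}~4)$ it uses the choice $a_1=2\tilde a_1+1,\ b_1=1$ and $a_2,a_3$ even with $b_2=b_3=0$, reducing to $r=(d+1)/2+2(\tilde a_1^2+\tilde a_1+\tilde a_2^2+\tilde a_3^2)$ and citing Sun \cite{zS15}; when $d\equiv7~(\mathrm{mod}~8)$ it takes $a_\ell=2\tilde a_\ell+1,\ b_\ell=-1$ for $\ell=1,2,3$, reducing to $r=3(d+1)/8+\sum\tilde a_\ell(2\tilde a_\ell+1)$ and citing Sun \cite{zS17}. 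You instead stay with $I_4$ throughout and appeal only to the classical Lagrange, Gauss, and Legendre theorems: for $d\equiv7~(\mathrm{mod}~8)$ your four-triangular-number reduction handles every $r\ge(d+1)/2$ at once without a parity split, and for $d\equiv1~(\mathrm{mod}~4)$ you replace the external Lemma~2.3 by a direct four-square argument for even $r$ and a Legendre-based ``$T=0$ or $T=1$'' dichotomy for odd $r$. The trade-off is that the paper's proof yields the extra information $\mathfrak{P}v^r\to I_3$ for odd $r$ (and the smaller threshold $3(d+1)/8$ when $d\equiv7~(\mathrm{mod}~8)$), at the cost of importing Sun's universality results, whereas your proof is fully self-contained at the level of nineteenth-century representation theorems and exactly matches the stated bound.
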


\begin{proof}
As a matter of fact, Lemma 2.3 in \cite{jL2} implies that $\mathfrak{P}v^r\to I_4$ for all positive even integers $r$; the discussion below will lead to $\mathfrak{P}v^r\to I_3$ when $r\geq(d+1)/2$ is a positive odd integer.

\noindent{\bf Case 1:} $d\equiv1~(\mathrm{mod}~4)$.
Then, $\mathcal{O}=\mathbb{Z}+\mathbb{Z}\omega$ for $\omega=\sqrt{-d}$.
By \cite[Theorem 25]{dM}, one has $\mathfrak{P}=\mathcal{O}2+\mathcal{O}(1+\omega)$.
So, it follows from \cite[Lemma 2.1]{jL2} that
\begin{equation*}
(1+\omega)\gamma_\ell=(1+\omega)(a_\ell+b_\ell\omega)=(a_\ell-db_\ell)+(a_\ell+b_\ell)\omega
\end{equation*}
for each $1\leq\ell\leq 3$, which leads to $2|(a_\ell+b_\ell)$; therefore, we have
\begin{equation*}
h(v^{r})=\frac{r}{2}=\sum_{\ell=1}^3N\Big(\frac{\gamma_\ell}{2}\Big)
=\sum_{\ell=1}^3\bigg(\frac{a_\ell^2}{4}+\frac{db_\ell^2}{4}\bigg)=:\sum_{\ell=1}^3P_d(a_\ell,b_\ell).
\end{equation*}
Choose $a_1=2\tilde{a}_1+1,b_1=1$ and $a_\ell=2\tilde{a}_\ell,b_\ell=0$ for $2\leq\ell\leq3$ to observe
\begin{equation}\label{Eq3.3}
\begin{aligned}
r&=2\sum_{\ell=1}^3P_d(a_\ell,b_\ell)=\frac{1}{2}\big((2\tilde{a}_1+1)^2+d\big)+2\tilde{a}^2_2+2\tilde{a}^2_3\\
&=\frac{d+1}{2}+2\big(\tilde{a}^2_1+\tilde{a}_1+\tilde{a}^2_2+\tilde{a}^2_3\big).
\end{aligned}
\end{equation}
Noice $(d+1)/2$ is an odd integer and $2\big(\tilde{a}^2_1+\tilde{a}_1+\tilde{a}^2_2+\tilde{a}^2_3\big)$ can represent all positive even integers in view of \cite[Page 1368]{zS15}.
Therefore, $\mathfrak{P}v^r\to I_3$ when $r\geq(d+1)/2$ is a positive odd integer.

\noindent{\bf Case 2:} $d\equiv7~(\mathrm{mod}~8)$.
Then, $\mathcal{O}=\mathbb{Z}+\mathbb{Z}\omega$ for $\omega=\big(1+\sqrt{-d}\big)/2$.
By Theorem 25 in \cite{dM}, one has either $\mathfrak{P}=\mathcal{O}2+\mathcal{O}\omega$ or $\mathfrak{P}=\mathcal{O}2+\mathcal{O}(1-\omega)$.
By Lemma \ref{Lem2}, we take $\mathfrak{P}=\mathcal{O}2+\mathcal{O}\omega$ for discussion.
It follows from \cite[Lemma 2.1]{jL2} that
\begin{equation*}
\omega\gamma_\ell=\omega(a_\ell+b_\ell\omega)=-\frac{d+1}{4}b_\ell+(a_\ell+b_\ell)\omega
\end{equation*}
for each $1\leq\ell\leq 3$, which leads to $2|(a_\ell+b_\ell)$; therefore, we have
\begin{equation*}
h(v^{r})=\frac{r}{2}=\sum_{\ell=1}^3N\Big(\frac{\gamma_\ell}{2}\Big)
=\sum_{\ell=1}^3\bigg(\frac{a_\ell^2}{4}+\frac{a_\ell b_\ell}{4}+\frac{(d+1)b_\ell^2}{16}\bigg)=:\sum_{\ell=1}^3P_d(a_\ell,b_\ell).
\end{equation*}
Choose $a_\ell=2\tilde{a}_\ell+1$ and $b_\ell=-1$ for $1\leq\ell\leq3$ to observe
\begin{equation}\label{Eq3.4}
\begin{aligned}
r&=2\sum_{\ell=1}^3P_d(a_\ell,b_\ell)=\sum_{\ell=1}^3\frac{1}{2}\Big((2\tilde{a}_\ell+1)^2-(2\tilde{a}_\ell+1)+\frac{d+1}{4}\Big)\\
&=\sum_{\ell=1}^3\Big(2\tilde{a}_\ell^2+\tilde{a}_\ell+\frac{d+1}{8}\Big)=\frac{3(d+1)}{8}+\sum_{\ell=1}^3\tilde{a}_\ell(2\tilde{a}_\ell+1).
\end{aligned}
\end{equation}
Notice $3(d+1)/8$ is an integer and $\sum_{\ell=1}^3\tilde{a}_\ell(2\tilde{a}_\ell+1)$ represents all positive integers by \cite[Theorem 1.2]{zS17}.
Therefore, $\mathfrak{P}v^r\to I_3$ whenever $r\geq(d+1)/2\geq3(d+1)/8$ is a positive odd integer.

Note that when $d\equiv3~(\mathrm{mod}~8)$, the prime ideal $\mathfrak{P}$ lying above $2$ is $2\mathcal{O}$, a principal ideal, which is in the same ideal class of $\mathcal{O}$.
\end{proof}

\begin{thm}\label{Thm3}
Let $\mathfrak{P}$ be a non-principal prime ideal in $E$, and let $p$ be the prime number lying below $\mathfrak{P}$.
When $p$ is odd with $p\hspace{-0.8mm}\nmid\hspace{-0.8mm}d$, the unary Hermitian lattice $\mathfrak{P}v^r$ is represented by $I_4$ for every positive integer $r\geq C:=p(p-1)^2/4+(p-1)n+(d+n^2)/p+2pd$ if $d\equiv1,2~(\mathrm{mod}~4)$ and $r\geq C:=p(p-1)^2/4+(p-1)n+(d+n^2)/p+p(d+1)/4$ if $d\equiv3~(\mathrm{mod}~4)$.
\end{thm}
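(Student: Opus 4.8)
The plan is to translate $\mathfrak{P}v^r\to I_4$ into an explicit representation problem for a quaternary quadratic form, exactly as in the proofs of Theorems~\ref{Thm1} and~\ref{Thm2}, then to absorb the residue of $r$ modulo $p$ into one summand of controlled size and to treat the remaining bulk with classical universality results. Fix an integer $n$ with $1\le n\le p-1$ and $n^2\equiv-d\pmod p$; by Lemma~\ref{Lem2} its choice is immaterial. Picking a convenient prime representative $\mathfrak{P}$ above $p$ by Theorem~25 of~\cite{dM} and running \cite[Lemma~2.1]{jL2} with conditions (1.1)/(1.2), then substituting $a_\ell=pc_\ell-nb_\ell$ (for $d\equiv1,2\pmod4$) or its analogue absorbing the coefficient $\tfrac{d+1}{4}$ (for $d\equiv3\pmod4$), one finds that $\mathfrak{P}v^r\to I_4$ holds if and only if $r=\sum_{\ell=1}^{4}Q(c_\ell,b_\ell)$ for some integers $c_\ell,b_\ell$, where $Q(c,b):=pc^2-2ncb+kb^2$ with $k:=(n^2+d)/p\in\mathbb{Z}$. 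When $d\equiv3\pmod4$ there is the additional feature that $4\mid k$ and that the parameter $b_\ell$ coming from $\gamma_\ell=a_\ell+b_\ell\omega$ may also be taken odd, producing summands of a slightly different shape; this is the source of the sharper second error term in that case. I record the identities $pQ(c,b)=(pc-nb)^2+db^2$, whence $Q(c,0)=pc^2$ and $Q(c,jp)=p\big((c-jn)^2+j^2d\big)$ for all $j\in\mathbb{Z}$; and, since $p\nmid d$ forces $p\nmid 2n$, the map $c\mapsto Q(c,1)\equiv k-2nc\pmod p$ is a bijection of $\mathbb{Z}/p\mathbb{Z}$.

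Next, given $r\ge C$, I would choose $c_1\in\{-(p-1)/2,\dots,(p-1)/2\}$ with $Q(c_1,1)\equiv r\pmod p$; then $0\le Q(c_1,1)=pc_1^2-2nc_1+k\le \tfrac{p(p-1)^2}{4}+(p-1)n+k$, which is exactly the sum of the first three terms of $C$. Hence $r-Q(c_1,1)=pN$ for an integer $N\ge0$, and $r\ge C$ forces $N\ge 2d$ when $d\equiv1,2\pmod4$, respectively $N\ge\tfrac{d+1}{4}$ when $d\equiv3\pmod4$ --- this is precisely what the last term of $C$ is chosen to guarantee. It then remains to write $pN=\sum_{\ell=2}^{4}Q(c_\ell,b_\ell)$.

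For $d\equiv1,2\pmod4$, using $Q(c,0)=pc^2$ and $Q(n+t,p)=p(t^2+d)$, it suffices that at least one of $N$, $N-d$, $N-2d$ is a sum of three integral squares: if $N-sd=t_1^2+t_2^2+t_3^2$ with $s\in\{0,1,2\}$ (all three shifts are $\ge0$ since $N\ge2d$), take $s$ of the summands to be $Q(n+t_i,p)=p(t_i^2+d)$ and the rest $p\,t_i^2$. That assertion follows from the Gauss--Legendre three-square theorem by a short check modulo $4$: $d$ is square-free, hence $\not\equiv0\pmod4$, so among $\{N,N-2d\}$ (if $d$ is odd) or $\{N,N-d\}$ (if $d\equiv2\pmod4$) some member is $\equiv2\pmod4$ or is odd and $\not\equiv7\pmod8$, hence not of the shape $4^a(8m+7)$. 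For $d\equiv3\pmod4$ I would instead spend one summand on an odd parameter: taking $\gamma_2$ with $b_2=p$ (so that $p$ divides the integer part of $\gamma_2$ and the corresponding summand of $r$ equals $p\big(j^2+j+\tfrac{d+1}{4}\big)$ for an arbitrary integer $j$), while two further summands contribute $p(c_3^2+c_4^2)$, one obtains $pN$ as soon as $N-\tfrac{d+1}{4}=j^2+j+c_3^2+c_4^2$; this is solvable for every $N\ge\tfrac{d+1}{4}$ because $x^2+x+y^2+z^2$ represents all non-negative integers (\cite[Page~1368]{zS15}, already used in Theorem~\ref{Thm2}). The two cases together give $\mathfrak{P}v^r\to I_4$ for all $r\ge C$.

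The delicate point I anticipate is the reduction step when $d\equiv3\pmod4$: one must simultaneously keep track of the congruence modulo $p$ supplied by \cite[Lemma~2.1]{jL2}, of the parity constraint forced by $\omega=(1+\sqrt{-d})/2$ (which is what makes $k=(n^2+d)/p$ divisible by $4$ and lets the odd-$b_\ell$ summand be inserted at a cost of only $p(d+1)/4$ rather than $2pd$), and of the requirement that the remaining three summands stay genuinely free, so that the universal form $x^2+x+y^2+z^2$ applies. Once this bookkeeping is in place, the case $d\equiv1,2\pmod4$ is routine, modulo the elementary three-square observation above.
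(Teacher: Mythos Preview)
Your proposal is correct and matches the paper's proof essentially step for step: isolate one summand $p\tilde a_1^2-2n\tilde a_1+(d+n^2)/p$ (the paper's $S_1$, your $Q(c_1,1)$) to absorb the residue of $r$ modulo $p$, then represent the remaining multiple $pN$ by three summands using the three-square theorem when $d\equiv1,2\pmod4$ and Sun's universality of $x^2+x+y^2+z^2$ when $d\equiv3\pmod4$. Your three-square case split (via $\{N,N-2d\}$ for $d$ odd and $\{N,N-d\}$ for $d\equiv2\pmod4$) is a mild reorganization of the paper's table but establishes the same claim, and your odd-$b_2=p$ summand in the $d\equiv3\pmod4$ case is exactly the paper's choice $a_2=p\tilde a_2$, $b_2=p$; note only that in that case the reduced binary form is $pc^2-ncb+\tfrac{k}{4}b^2$, so your ``$Q(c_1,1)$'' should be read as this form evaluated at $b=2$, which is precisely the paper's $b_1=2$.
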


\begin{proof}
\noindent{\bf Case 1:} $d\equiv1,2~(\mathrm{mod}~4)$.
Then, $\mathcal{O}=\mathbb{Z}+\mathbb{Z}\omega$ for $\omega=\sqrt{-d}$.
By Theorem 25 in \cite{dM}, one has either $\mathfrak{P}=\mathcal{O}p+\mathcal{O}(n+\omega)$ or $\mathfrak{P}=\mathcal{O}p+\mathcal{O}(n-\omega)$, where $n$ is the least positive integer with $-d\equiv n^2~(\mathrm{mod}~p)$.
By Lemma \ref{Lem2}, we take $\mathfrak{P}=\mathcal{O}p+\mathcal{O}(n+\omega)$ for discussion.
It follows from \cite[Lemma 2.1]{jL2} that
\begin{equation*}
(n+\omega)\gamma_\ell=(n+\omega)(a_\ell+b_\ell\omega)=(na_\ell-db_\ell)+(a_\ell+nb_\ell)\omega
\end{equation*}
for each $1\leq\ell\leq 4$, which leads to $p|(a_\ell+nb_\ell)$; therefore, we have
\begin{equation*}
h(v^{r})=\frac{r}{p}=\sum_{\ell=1}^4N\Big(\frac{\gamma_\ell}{p}\Big)
=\sum_{\ell=1}^4\bigg(\frac{a_\ell^2}{p^2}+\frac{db_\ell^2}{p^2}\bigg)=:\sum_{\ell=1}^4P_d(a_\ell,b_\ell).
\end{equation*}
Choose $a_1=p\tilde{a}_1-n,b_1=1$ and $a_\ell=p\tilde{a}_\ell,b_\ell=p\tilde{b}_\ell$ for $2\leq\ell\leq4$ to observe
\begin{equation}\label{Eq3.5}
{\small\begin{aligned}
r=&~p\sum_{\ell=1}^4P_d(a_\ell,b_\ell)=\bigg(p\tilde{a}_1^2-2\tilde{a}_1n+\frac{d+n^2}{p}\bigg)+
p\big(\tilde{a}_2^2+\tilde{a}_3^2+\tilde{a}_4^2+d\tilde{b}_2^2+d\tilde{b}_3^2+d\tilde{b}_4^2\big)\\
:=&~S_1+pS_2.
\end{aligned}}
\end{equation}
For $S_1$, note $(d+n^2)/p$ is an integer and $p\tilde{a}_1^2-2\tilde{a}_1n$ forms a complete representative set when $\tilde{a}_1$ runs through the integral set $\{-(p-1)/2,\ldots,-1,0,1,\ldots,(p-1)/2\}$.
For $S_2$, recall $\tilde{a}_2^2+\tilde{a}_3^2+\tilde{a}_4^2$ can represent all positive integers except for those of the form $4^s(8t+7)$ with $s,t$ nonnegative integers.
Given an integer $m\geq2d$ of the form $4^s(8t+7)$, we know that either $m-d$ or $m-2d$ can be represented by $\tilde{a}_2^2+\tilde{a}_3^2+\tilde{a}_4^2$; details are listed below
{\small\begin{center}
\setlength{\arrayrulewidth}{0.2mm}
\setlength{\tabcolsep}{2pt}
\renewcommand{\arraystretch}{0.81}
\begin{tabular}{|m{2.5cm}|m{2cm}|m{2cm}|m{2cm}|m{2cm}|}
\hline           & \vskip2pt $d\equiv1~(\mathrm{mod}~8)$ & \vskip2pt $d\equiv2~(\mathrm{mod}~8)$ & \vskip2pt $d\equiv5~(\mathrm{mod}~8)$ & \vskip2pt $d\equiv6~(\mathrm{mod}~8)$\\
\hline \vskip2pt $m=4^0(8t+7)$
                 & \vskip2pt $m-d$                       & \vskip2pt $m-d$                       & \vskip2pt $m-d$                       & \vskip2pt $m-d$\\
\hline \vskip2pt $m=4^1(8t+7)$
                 & \vskip2pt $m-d$                       & \vskip2pt $m-d$                       & \vskip2pt $m-2d$                      & \vskip2pt $m-d$\\
\hline \vskip2pt $m=4^{s\geq2}(8t+7)$
                 & \vskip2pt $m-2d$                      & \vskip2pt $m-d$                       & \vskip2pt $m-d$                       & \vskip2pt $m-d$\\
\hline
\end{tabular}
\end{center}}
\noindent So, $S_2$ represents all the positive integers $m\geq2d$.
Combine $S_1$ and $S_2$ to see $\mathfrak{P}v^r\to I_4$ for every positive integer $r\geq p(p-1)^2/4+(p-1)n+(d+n^2)/p+2pd$.

\noindent{\bf Case 2:} $d\equiv3~(\mathrm{mod}~4)$.
Then, $\mathcal{O}=\mathbb{Z}+\mathbb{Z}\omega$ for $\omega=\big(1+\sqrt{-d}\big)/2$, and Proposition 8.3 in \cite{jN} leads to $\mathfrak{P}_1=\big(p,-\frac{n+1}{2}+\omega\big)$ and $\mathfrak{P}_2=\big(p,\frac{n-1}{2}+\omega\big)$ for the least positive odd integer $n$ with $-d\equiv n^2~(\mathrm{mod}~p)$.
By Lemma 2.1 in \cite{jL2} and Lemma \ref{Lem2}, we take $\mathfrak{P}=\mathcal{O}p+\mathcal{O}\big(\frac{n-1}{2}+\omega\big)$ for discussion and have
\begin{equation*}
\Big(\frac{n-1}{2}+\omega\Big)\gamma_\ell=\Big(\frac{n-1}{2}+\omega\Big)(a_\ell+b_\ell\omega)
=\Big(\frac{n-1}{2}a_\ell-\frac{d+1}{4}b_\ell\Big)+\Big(a_\ell+\frac{n+1}{2}b_\ell\Big)\omega
\end{equation*}
for each $1\leq\ell\leq 4$, which leads to $p\big|\big(a_\ell+\frac{n+1}{2}b_\ell\big)$; therefore, one has
\begin{equation*}
h(v^{r})=\frac{r}{p}=\sum_{\ell=1}^4N\Big(\frac{\gamma_\ell}{p}\Big)
=\sum_{\ell=1}^4\bigg(\frac{a_\ell^2}{p^2}+\frac{a_\ell b_\ell}{p^2}+\frac{(d+1)b_\ell^2}{4p^2}\bigg)=:\sum_{\ell=1}^4P_d(a_\ell,b_\ell).
\end{equation*}
Take $a_1=p\tilde{a}_1-(n+1),b_1=2$, $a_2=p\tilde{a}_2,b_2=p$, and $a_\ell=p\tilde{a}_\ell,b_\ell=0$ for $3\leq\ell\leq4$ to observe
\begin{equation}\label{Eq3.6}
{\small\begin{aligned}
r=&~p\sum_{\ell=1}^4P_d(a_\ell,b_\ell)=\bigg(p\tilde{a}_1^2-2\tilde{a}_1n+\frac{d+n^2}{p}\bigg)+p\Big(\tilde{a}_2^2+\tilde{a}_2+\frac{d+1}{4}+\tilde{a}_3^2+\tilde{a}_4^2\Big)\\
:=&~S_1+pS_2.
\end{aligned}}
\end{equation}
For $S_1$, note $(d+n^2)/p$ is an integer and $p\tilde{a}_1^2-2\tilde{a}_1n$ forms a complete representative set when $\tilde{a}_1$ runs through the integral set $\{-(p-1)/2,\ldots,-1,0,1,\ldots,(p-1)/2\}$.
For $S_2$, recall $\tilde{a}_2^2+\tilde{a}_2+\tilde{a}_3^2+\tilde{a}_4^2$ can represent all positive integers by \cite[Page 1368]{zS15}.
Thus, $S_2$ represents all the positive integers $m\geq(d+1)/4$.
Combine $S_1$ and $S_2$ to see $\mathfrak{P}v^r\to I_4$ for every positive integer $r\geq p(p-1)^2/4+(p-1)n+(d+n^2)/p+p(d+1)/4$.
\end{proof}

\begin{algorithm}
\renewcommand{\thealgorithm}{}
\caption{An algorithm for $E(p)$ and $g(p)$}
{\bf Input:}  $p$, $d$ and other indispensable parameters
\vskip 0pt\noindent {\bf Output:} $E(p)$ and $g(p)$
\begin{algorithmic}[1]
    \State Rewrite $P_d(a_\ell,b_\ell)$ and remove the congruence restrictions on $a_\ell$ and $b_\ell$; see Section 4 for details.
    \State $E(p)\gets$ the set of values of $r$ that are less than $C$ and that cannot be represented by $\sum_{\ell=1}^5pP_d(a_\ell, b_\ell)$.
    \State $F(p)\gets$ the set of values of $r$ that are less than $C$ and that cannot be represented by $\sum_{\ell=1}^4pP_d(a_\ell, b_\ell)$.
    \If{$E(p)=F(p)$}
    \State $g(p)=4$
    \Else
    \State $g(p)=5$
    \EndIf
    \State Return $g(p),E(p)$
\end{algorithmic}
\end{algorithm}

\section{SageMath codes for an example}\label{Sec:Ex}
In this section, I provide the SageMath codes for this algorithm and use the imaginary quadratic field $E=\mathbb{Q}\big(\sqrt{-87}\big)$ (with class number $6$) as a demonstration for the process of determining the set $\mathfrak{S}_d(1)$ and the value of the $g$-invariant $g_d(1)$.

In order to determine the positive integers that can be represented by the quadratic forms $\sum_{\ell=1}^5pP_d(a_\ell,b_\ell)$ or $\sum_{\ell=1}^4pP_d(a_\ell,b_\ell)$, one needs to remove the restrictions on $a_\ell,b_\ell$ using the given congruence relation between them; for instance, when $p\hspace{-0.8mm}\nmid\hspace{-0.8mm}d$ and $d\equiv3~(\mathrm{mod}~4)$, we have
\begin{equation*}
pP_d(a_\ell,b_\ell)=\frac{1}{p}\Big(a_\ell^2+a_\ell b_\ell+\frac{d+1}{4}b_\ell^2\Big)
\end{equation*}
with $p\big|\big(a_\ell+\frac{n+1}{2}b_\ell\big)$. Choose $a_\ell=p\tilde{a}_\ell-\frac{n+1}{2}\tilde{b}_\ell,b_\ell=\tilde{b}_\ell$ for arbitrary integers $\tilde{a}_\ell,\tilde{b}_\ell$ to deduce
\begin{equation*}
pP_d(a_\ell,b_\ell)=p\tilde{a}_\ell^2-n\tilde{a}_\ell\tilde{b}_\ell+\frac{d+n^2}{4p}\tilde{b}_\ell^2,
\end{equation*}
a quadratic form with integral variables $\tilde{a}_\ell,\tilde{b}_\ell$.
This new expression of $pP_d(a_\ell,b_\ell)$ is used in Code $6$.

The followings are the codes for the six cases discussed above.

\newpage
\begin{center}
{\bf Code 1: $p|d$ and $d\equiv1,2~(\mathrm{mod}~4)$}
\end{center}
\noindent {\bf Input:} $p;d$
\vskip 0pt\noindent {\bf Output:} $g(p);E(p)$
\begin{lstlisting}[language=Python]
sage: p=?; d=?; C=(p-1)*d/p
sage: Q=QuadraticForm(ZZ, 10, [p,0,0,0,0,0,0,0,0,0,d/p,0,0,0,0,0,0,0,0,p,0,0,0,0,0,0,0,d/p,0,0,0,0,0,0,p,0,0,0,0,0,d/p,0,0,0,0,p,0,0,0,d/p,0,0,p,0,d/p])
sage: S=Q.representation_number_list(C)
sage: Q=QuadraticForm(ZZ, 8, [p,0,0,0,0,0,0,0,d/p,0,0,0,0,0,0,p,0,0,0,0,0,d/p,0,0,0,0,p,0,0,0,d/p,0,0,p,0,d/p])
sage: T=Q.representation_number_list(C)
sage: def u(l):
sage:     if S[l]==0:return l
sage:     else:return 0
sage: E=[u(l) for l in [0..C-1]]
sage: E(p)=[value for value in E if value !=0]
sage: def v(l):
sage:     if T[l]==0:return l
sage:     else:return 0
sage: F=[v(l) for l in [0..C-1]]
sage: F(p)=[value for value in F if value !=0]
sage: def g(p):
sage:     if E(p)==F(p): return 4
sage:     else: return 5
sage: g(p);E(p)
\end{lstlisting}

\newpage
\begin{center}
{\bf Code 2: $p|d$ and $d\equiv3~(\mathrm{mod}~4)$}
\end{center}
\noindent {\bf Input:} $p;d$
\vskip 0pt\noindent {\bf Output:} $g(p);E(p)$
\begin{lstlisting}[language=Python]
sage: p=?; d=?; C=(p-1)*d/p
sage: Q=QuadraticForm(ZZ, 10, [d/p,-d,0,0,0,0,0,0,0,0,p*(1+d)/4,0,0,0,0,0,0,0,0,d/p,-d,0,0,0,0,0,0,p*(1+d)/4,0,0,0,0,0,0,d/p,-d,0,0,0,0,p*(1+d)/4,0,0,0,0,d/p,-d,0,0,p*(1+d)/4,0,0,d/p,-d,p*(1+d)/4])
sage: S=Q.representation_number_list(C)
sage: Q=QuadraticForm(ZZ, 8, [d/p,-d,0,0,0,0,0,0,p*(1+d)/4,0,0,0,0,0,0,d/p,-d,0,0,0,0,p*(1+d)/4,0,0,0,0,d/p,-d,0,0,p*(1+d)/4,0,0,d/p,-d,p*(1+d)/4])
sage: T=Q.representation_number_list(C)
sage: def u(l):
sage:     if S[l]==0:return l
sage:     else:return 0
sage: E=[u(l) for l in [0..C-1]]
sage: E(p)=[value for value in E if value !=0]
sage: def v(l):
sage:     if T[l]==0:return l
sage:     else:return 0
sage: F=[v(l) for l in [0..C-1]]
sage: F(p)=[value for value in F if value !=0]
sage: def g(p):
sage:     if E(p)==F(p): return 4
sage:     else: return 5
sage: g(p);E(p)
\end{lstlisting}

\newpage
\begin{center}
{\bf Code 3: $2\hspace{-0.8mm}\nmid\hspace{-0.8mm}d$ and $d\equiv1~(\mathrm{mod}~4)$}
\end{center}
\noindent {\bf Input:} $d$
\vskip 0pt\noindent {\bf Output:} $g(p);E(p)$
\begin{lstlisting}[language=Python]
sage: p=2; d=?; C=(1+d)/2
sage: Q=QuadraticForm(ZZ, 10, [2,-2,0,0,0,0,0,0,0,0,(1+d)/2,0,0,0,0,0,0,0,0,2,-2,0,0,0,0,0,0,(1+d)/2,0,0,0,0,0,0,2,-2,0,0,0,0,(1+d)/2,0,0,0,0,2,-2,0,0,(1+d)/2,0,0,2,-2,(1+d)/2])
sage: S=Q.representation_number_list(C)
sage: Q=QuadraticForm(ZZ, 8, [2,-2,0,0,0,0,0,0,(1+d)/2,0,0,0,0,0,0,2,-2,0,0,0,0,(1+d)/2,0,0,0,0,2,-2,0,0,(1+d)/2,0,0,2,-2,(1+d)/2])
sage: T=Q.representation_number_list(C)
sage: def u(l):
sage:     if S[l]==0:return l
sage:     else:return 0
sage: E=[u(l) for l in [0..C-1]]
sage: E(p)=[value for value in E if value !=0]
sage: def v(l):
sage:     if T[l]==0:return l
sage:     else:return 0
sage: F=[v(l) for l in [0..C-1]]
sage: F(p)=[value for value in F if value !=0]
sage: def g(p):
sage:     if E(p)==F(p): return 4
sage:     else: return 5
sage: g(p);E(p)
\end{lstlisting}

\newpage
\begin{center}
{\bf Code 4: $2\hspace{-0.8mm}\nmid\hspace{-0.8mm}d$ and $d\equiv7~(\mathrm{mod}~8)$}
\end{center}
\noindent {\bf Input:} $d$
\vskip 0pt\noindent {\bf Output:} $g(p);E(p)$
\begin{lstlisting}[language=Python]
sage: p=2; d=?; C=(1+d)/2
sage: Q=QuadraticForm(ZZ, 10, [2,-1,0,0,0,0,0,0,0,0,(1+d)/8,0,0,0,0,0,0,0,0,2,-1,0,0,0,0,0,0,(1+d)/8,0,0,0,0,0,0,2,-1,0,0,0,0,(1+d)/8,0,0,0,0,2,-1,0,0,(1+d)/8,0,0,2,-1,(1+d)/8])
sage: S=Q.representation_number_list(C)
sage: Q=QuadraticForm(ZZ, 8, [2,-1,0,0,0,0,0,0,(1+d)/8,0,0,0,0,0,0,2,-1,0,0,0,0,(1+d)/8,0,0,0,0,2,-1,0,0,(1+d)/8,0,0,2,-1,(1+d)/8])
sage: T=Q.representation_number_list(C)
sage: def u(l):
sage:     if S[l]==0:return l
sage:     else:return 0
sage: E=[u(l) for l in [0..C-1]]
sage: E(p)=[value for value in E if value !=0]
sage: def v(l):
sage:     if T[l]==0:return l
sage:     else:return 0
sage: F=[v(l) for l in [0..C-1]]
sage: F(p)=[value for value in F if value !=0]
sage: def g(p):
sage:     if E(p)==F(p): return 4
sage:     else: return 5
sage: g(p);E(p)
\end{lstlisting}

\newpage
\begin{center}
{\bf Code 5: $p\hspace{-0.8mm}\nmid\hspace{-0.8mm}d$ and $d\equiv1,2~(\mathrm{mod}~4)$}
\end{center}
\noindent {\bf Input:} $p;d;n$
\vskip 0pt\noindent {\bf Output:} $g(p);E(p)$
\begin{lstlisting}[language=Python]
sage: p=?; d=?; n=?; C=p*(p-1)*(p-1)/4+(p-1)*n+(d+n*n)/p+2*p*d
sage: Q=QuadraticForm(ZZ, 10, [p,-2*n,0,0,0,0,0,0,0,0,(d+n*n)/p,0,0,0,0,0,0,0,0,p,-2*n,0,0,0,0,0,0,(d+n*n)/p,0,0,0,0,0,0,p,-2*n,0,0,0,0,(d+n*n)/p,0,0,0,0,p,-2*n,0,0,(d+n*n)/p,0,0,p,-2*n,(d+n*n)/p])
sage: S=Q.representation_number_list(C)
sage: Q=QuadraticForm(ZZ, 8, [p,-2*n,0,0,0,0,0,0,(d+n*n)/p,0,0,0,0,0,0,p,-2*n,0,0,0,0,(d+n*n)/p,0,0,0,0,p,-2*n,0,0,(d+n*n)/p,0,0,p,-2*n,(d+n*n)/p])
sage: T=Q.representation_number_list(C)
sage: def u(l):
sage:     if S[l]==0:return l
sage:     else:return 0
sage: E=[u(l) for l in [0..C-1]]
sage: E(p)=[value for value in E if value !=0]
sage: def v(l):
sage:     if T[l]==0:return l
sage:     else:return 0
sage: F=[v(l) for l in [0..C-1]]
sage: F(p)=[value for value in F if value !=0]
sage: def g(p):
sage:     if E(p)==F(p): return 4
sage:     else: return 5
sage: g(p);E(p)
\end{lstlisting}

\newpage
\begin{center}
{\bf Code 6: $p\hspace{-0.8mm}\nmid\hspace{-0.8mm}d$ and $d\equiv3~(\mathrm{mod}~4)$}
\end{center}
\noindent {\bf Input:} $p;d;n$
\vskip 0pt\noindent {\bf Output:} $g(p);E(p)$
\begin{lstlisting}[language=Python]
sage: p=?; d=?; n=?; C=p*(p-1)*(p-1)/4+(p-1)*n+(d+n*n)/p+p*(d+1)/4
sage: Q=QuadraticForm(ZZ, 10, [p,-n,0,0,0,0,0,0,0,0,(d+n*n)/(4*p),0,0,0,0,0,0,0,0,p,-n,0,0,0,0,0,0,(d+n*n)/(4*p),0,0,0,0,0,0,p,-n,0,0,0,0,(d+n*n)/(4*p),0,0,0,0,p,-n,0,0,(d+n*n)/(4*p),0,0,p,-n,(d+n*n)/(4*p)])
sage: S=Q.representation_number_list(C)
sage: Q=QuadraticForm(ZZ, 8, [p,-n,0,0,0,0,0,0,(d+n*n)/(4*p),0,0,0,0,0,0,p,-n,0,0,0,0,(d+n*n)/(4*p),0,0,0,0,p,-n,0,0,(d+n*n)/(4*p),0,0,p,-n,(d+n*n)/(4*p)])
sage: T=Q.representation_number_list(C)
sage: def u(l):
sage:     if S[l]==0:return l
sage:     else:return 0
sage: E=[u(l) for l in [0..C-1]]
sage: E(p)=[value for value in E if value !=0]
sage: def v(l):
sage:     if T[l]==0:return l
sage:     else:return 0
sage: F=[v(l) for l in [0..C-1]]
sage: F(p)=[value for value in F if value !=0]
sage: def g(p):
sage:     if E(p)==F(p): return 4
sage:     else: return 5
sage: g(p);E(p)
\end{lstlisting}

\newpage
Now, consider the imaginary quadratic field $E=\mathbb{Q}\big(\sqrt{-87}\big)$ with class number $6$.
\vskip10pt
\noindent{\bf Step 1.} Identify $5$ prime ideas as representatives of the $5$ non-principal ideal classes.
\vskip 0pt\noindent {\bf Input:} $d$
\vskip 0pt\noindent {\bf Output:} $6$ nonequivalent prime ideal representatives.
\begin{lstlisting}[language=Python]
sage: d=87
sage: K.<a>=NumberField(x^2+d)
sage: K.class_number()
6
sage: Cl=K.class_group()
sage: [c.representative_prime() for c in Cl]
[Fractional ideal (5),
 Fractional ideal (2,1/2*a-1/2),
 Fractional ideal (7,1/2*a+5/2),
 Fractional ideal (3,1/2*a+3/2),
 Fractional ideal (7,1/2*a+9/2),
 Fractional ideal (2,1/2*a+1/2)]
\end{lstlisting}
$a=\sqrt{-87}$ in this SageMath code outcome.
Denote by $\mathfrak{P}_1=\mathcal{O}$, $\mathfrak{P}_2=\mathcal{O}2+\mathcal{O}(-1+\omega)$, $\mathfrak{P}_3=\mathcal{O}7+\mathcal{O}(2+\omega)$, $\mathfrak{P}_4=\mathcal{O}3+\mathcal{O}(1+\omega)$, $\mathfrak{P}_5=\mathcal{O}7+\mathcal{O}(4+\omega)$, and $\mathfrak{P}_6=\mathcal{O}2+\mathcal{O}\omega$.

\newpage
\noindent{\bf Step 2:} From the above result, $3$ is ramified in $E$ and $2,7$ splits in $E$.
By Lemma \ref{Lem2}, one only needs to check the representations of $\mathfrak{P}_2v_2^{r_2}$, $\mathfrak{P}_3v_3^{r_3}$ and $\mathfrak{P}_4v_4^{r_4}$ by $I_m$.
\vskip 0pt\noindent {\bf Input:} $d=87$
\vskip 0pt\noindent {\bf Output:} $g(2);E(2)$
\begin{lstlisting}[language=Python]
sage: p=2; d=87; C=(1+d)/2
sage: Q=QuadraticForm(ZZ, 10, [2,-1,0,0,0,0,0,0,0,0,(1+d)/8,0,0,0,0,0,0,0,0,2,-1,0,0,0,0,0,0,(1+d)/8,0,0,0,0,0,0,2,-1,0,0,0,0,(1+d)/8,0,0,0,0,2,-1,0,0,(1+d)/8,0,0,2,-1,(1+d)/8])
sage: S=Q.representation_number_list(C)
sage: Q=QuadraticForm(ZZ, 8, [2,-1,0,0,0,0,0,0,(1+d)/8,0,0,0,0,0,0,2,-1,0,0,0,0,(1+d)/8,0,0,0,0,2,-1,0,0,(1+d)/8,0,0,2,-1,(1+d)/8])
sage: T=Q.representation_number_list(C)
sage: def u(l):
sage:     if S[l]==0:return l
sage:     else:return 0
sage: E=[u(l) for l in [0..C-1]]
sage: E(p)=[value for value in E if value !=0]
sage: def v(l):
sage:     if T[l]==0:return l
sage:     else:return 0
sage: F=[v(l) for l in [0..C-1]]
sage: F(p)=[value for value in F if value !=0]
sage: def g(p):
sage:     if E(p)==F(p): return 4
sage:     else: return 5
sage: g(p);E(p)
4
(1, 3, 5, 7, 9)
\end{lstlisting}

\newpage
\noindent {\bf Input:} $p=7;d=87;n=5$
\vskip 0pt\noindent {\bf Output:} $g(7);E(7)$
\begin{lstlisting}[language=Python]
sage: p=7; d=87; n=5; C=p*(p-1)*(p-1)/4+(p-1)*n+(d+n*n)/p+p*(d+1)/4
sage: Q=QuadraticForm(ZZ, 10, [p,-n,0,0,0,0,0,0,0,0,(d+n*n)/(4*p),0,0,0,0,0,0,0,0,p,-n,0,0,0,0,0,0,(d+n*n)/(4*p),0,0,0,0,0,0,p,-n,0,0,0,0,(d+n*n)/(4*p),0,0,0,0,p,-n,0,0,(d+n*n)/(4*p),0,0,p,-n,(d+n*n)/(4*p)])
sage: S=Q.representation_number_list(C)
sage: Q=QuadraticForm(ZZ, 8, [p,-n,0,0,0,0,0,0,(d+n*n)/(4*p),0,0,0,0,0,0,p,-n,0,0,0,0,(d+n*n)/(4*p),0,0,0,0,p,-n,0,0,(d+n*n)/(4*p),0,0,p,-n,(d+n*n)/(4*p)])
sage: T=Q.representation_number_list(C)
sage: def u(l):
sage:     if S[l]==0:return l
sage:     else:return 0
sage: E=[u(l) for l in [0..C-1]]
sage: E(p)=[value for value in E if value !=0]
sage: def v(l):
sage:     if T[l]==0:return l
sage:     else:return 0
sage: F=[v(l) for l in [0..C-1]]
sage: F(p)=[value for value in F if value !=0]
sage: def g(p):
sage:     if E(p)==F(p): return 4
sage:     else: return 5
sage: g(p);E(p)
4
(1, 2, 3, 5, 9)
\end{lstlisting}

\newpage
\noindent {\bf Input:} $p=3;d=87$
\vskip 0pt\noindent {\bf Output:} $g(3);E(3)$
\begin{lstlisting}[language=Python]
sage: p=3; d=87; C=(p-1)*d/p
sage: Q=QuadraticForm(ZZ, 10, [d/p,-d,0,0,0,0,0,0,0,0,p*(1+d)/4,0,0,0,0,0,0,0,0,d/p,-d,0,0,0,0,0,0,p*(1+d)/4,0,0,0,0,0,0,d/p,-d,0,0,0,0,p*(1+d)/4,0,0,0,0,d/p,-d,0,0,p*(1+d)/4,0,0,d/p,-d,p*(1+d)/4])
sage: S=Q.representation_number_list(C)
sage: Q=QuadraticForm(ZZ, 8, [d/p,-d,0,0,0,0,0,0,p*(1+d)/4,0,0,0,0,0,0,d/p,-d,0,0,0,0,p*(1+d)/4,0,0,0,0,d/p,-d,0,0,p*(1+d)/4,0,0,d/p,-d,p*(1+d)/4])
sage: T=Q.representation_number_list(C)
sage: def u(l):
sage:     if S[l]==0:return l
sage:     else:return 0
sage: E=[u(l) for l in [0..C-1]]
sage: E(p)=[value for value in E if value !=0]
sage: def v(l):
sage:     if T[l]==0:return l
sage:     else:return 0
sage: F=[v(l) for l in [0..C-1]]
sage: F(p)=[value for value in F if value !=0]
sage: def g(p):
sage:     if E(p)==F(p): return 4
sage:     else: return 5
sage: g(p);E(p)
4
(1, 2, 4, 5, 7, 10, 13)
\end{lstlisting}

The preceding results together yield $g_{87}(1)=4$ and $\mathfrak{S}_{87}(1)=\big\{\mathcal{O}v_1^{r_1}:r_1\geq1\big\}\bigcup\allowbreak
\big\{\mathfrak{P}_2v_2^{r_2}:r_2\neq1,3,5,7,9\big\}\bigcup\big\{\mathfrak{P}_3v_3^{r_3}:r_3\neq1,2,3,5,9\big\}
\bigcup\big\{\mathfrak{P}_4v_4^{r_4}:r_4\neq1,2,4,5,7,\allowbreak
10,13\big\}\bigcup\big\{\mathfrak{P}_5v_5^{r_5}:r_5\neq1,2,3,5,9\big\}\bigcup\big\{\mathfrak{P}_6v_6^{r_6}:r_6\neq1,3,5,7,9\big\}$.

\newpage\small


\begin{thebibliography}{99}
\bibitem{BCIL}
C.N. Beli, W.K. Chan, M.I. Icaza, and J. Liu.
\emph{On a Waring's problem for integral quadratic and Hermitian forms}.
Trans. Amer. Math. Soc. {\bf371} (2019), 5505--5527.

\bibitem{BK}
V. Blomer and V. Kala.
\emph{On the rank of universal quadratic forms over real quadratic fields}.
Doc. Math. {\bf23} (2018), 15--34.

\bibitem{CI}
W.K. Chan and M.I. Icaza.
\emph{Hermite reduction and a Waring's problem for integral quadratic forms over number fields}.
Trans. Amer. Math. Soc. {\bf374} (2021), 2967--2985.

\bibitem{ljG}
L.J. Gerstein.
\emph{Classes of definite Hermitian forms}.
Amer. J. Math. {\bf100} (1978), 81--97.

\bibitem{fG}
F. G\"{o}tzky.
\emph{\"{U}ber eine zahlentheoretische Anwendung von Modulfunktionen zweier Ver\"{a}nderlicher}.
Math. Ann. \textbf{100} (1928), 411--437.

\bibitem{KY1}
V. Kala and P. Yatsyna.
\emph{Lifting problem for universal quadratic forms}.
Adv. Math. {\bf377} (2021), Paper No. 107497, 24 pp.

\bibitem{KY2}
V. Kala and P. Yatsyna.
\emph{On Kitaoka's conjecture and lifting problem for universal quadratic forms}.
Bull. Lond. Math. Soc. {\bf55} (2023), 854--864.

\bibitem{KKP}
B.M. Kim, J.Y. Kim, and P.S. Park.
\emph{The fifteen theorem for universal Hermitian lattices over imaginary quadratic fields}.
Math. Comp. {\bf79} (2010), 1123--1144.


\bibitem{KRS}
J. Kr\'{a}sensk\'{y}, M. Ra\v{s}ka, and E. Sgallov\'{a}.
\emph{Pythagoras numbers of orders in biquadratic fields}.
Expo. Math. {\bf40} (2022), 1181--1228.

\bibitem{KY}
J. Kr\'{a}sensk\'{y} and P. Yatsyna.
\emph{On quadratic Waring's problem in totally real number fields}.
Proc. Amer. Math. Soc. {\bf151} (2023), 1471--1485.

\bibitem{jL1}
J. Liu.
\emph{On a Waring's problem for Hermitian lattices}.
Bull. Sci. Math. {\bf174} (2022), Paper No. 102970, 25 pp.

\bibitem{jL2}
J. Liu.
\emph{$g$-invariant on unary Hermitian lattices over imaginary quadratic fields with class number $2$ or $3$}.
J. Algebra {\bf622} (2023), 636--675.

\bibitem{hM}
H. Maa{\ss}.
\emph{\"{U}ber die Darstellung total positiver Zahlen des K\"{o}rpers $R\big(\sqrt{5}\big)$ als Summe von drei Quadraten}.
Abh. Math. Semin. Univ. Hambg. {\bf14} (1941), 185--191.

\bibitem{dM}
D. Marcus.
{\bf Number fields}.
Springer-Verlag, New York-Heidelberg, 1977.

\bibitem{ljM}
L.J. Mordell.
\emph{A new Waring's problem with squares of linear forms}.
Quart. J. Math. {\bf1} (1930), 276--288.

\bibitem{jN}
J. Neukirch.
{\bf Algebraic number theory}.
Springer-Verlag, Berlin, 1999.

\bibitem{otO}
O.T. O'Meara.
{\bf Introduction to quadratic forms}.
Springer-Verlag, Berlin, 2000.

\bibitem{gS}
G. Shimura.
\emph{Arithmetic of unitary groups}.
Ann. of Math. {\bf 79} (1964), 369--409.

\bibitem{clS}
C.L. Siegel.
\emph{Sums of $m^{th}$ powers of algebraic integers}.
Ann. of Math. {\bf 46} (1945), 313--339.

\bibitem{zS15}
Z.-W. Sun.
\emph{On universal sums of polygonal numbers}.
Sci. China Math. {\bf 58} (2015), 1367--1396.

\bibitem{zS17}
Z.-W. Sun.
\emph{On $x(ax+1)+y(by+1)+z(cz+1)$ and $x(ax+b)+y(ay+c)+z(az+d)$}.
J. Number Theory {\bf 171} (2017), 275--283.
\end{thebibliography}
\end{document}